\newcommand{\R}{\mathbb{R}}
\newcommand{\N}{\mathbb{N}}
\newcommand{\dive}{\operatorname{div}}
\newcommand{\x}{\mathbf{x}}
\newcommand{\y}{\mathbf{y}}
\newcommand{\z}{\mathbf{z}}
\newcommand{\X}{\mathbf{X}}
\newtheorem{thm}{Theorem}[section]
\newtheorem{lem}[thm]{Lemma}
\theoremstyle{definition}
\newtheorem{defn}[thm]{Definition}
\newtheorem{rem}[thm]{Remark}
\numberwithin{equation}{section}
\author[L. Ping]{Li Ping}
    \address{School of Mathematics and Statistics \\
    Wuhan University\\
    430072 Wuhan, China and\\
    Departamento de Matem\'aticas\\
    Universidad Aut\'onoma de Madrid\\
    28049 Madrid, Spain}
\email{marrisa0708@163.com}
\author[P. R. Stinga]{Pablo Ra\'ul Stinga}
    \address{Department of Mathematics\\
    Iowa State University\\
    396 Carver Hall, Ames, IA 50011, United States of America}
    \email{stinga@iastate.edu}
\author[J. L. Torrea]{Jos\'e L. Torrea}
    \address{Departamento de Matem\'aticas\\
    Universidad Aut\'onoma de Madrid\\
    28049 Madrid, Spain}
\email{joseluis.torrea@uam.es}
\keywords{Heat equation, harmonic oscillator evolution equation, degenerate parabolic extension problem,
weighted Sobolev estimate, mixed-norm estimate, language of semigroups}
\subjclass[2010]{Primary: 35K10, 35B45, 42B37. Secondary: 58J35, 42B20}
\begin{document}

%%%%%%%%%%%%%%%%%%%%%%%%%%%%%%%%%%%%%%%%%%%%%%%%%%%%%%
\title[Weighted mixed-norm Sobolev estimates for parabolic equations]{On weighted mixed-norm
Sobolev estimates \\ for some basic parabolic equations}
%%%%%%%%%%%%%%%%%%%%%%%%%%%%%%%%%%%%%%%%%%%%%%%%%%%%%%

%%%%%%%%%%%%%%%%%%%%%%%%%%%%%%%%%%%%%%%%%%%%%%%%%%%%%%
\begin{abstract}
Novel global weighted parabolic Sobolev estimates, weighted mixed-norm
estimates and a.e. convergence results of
singular integrals for evolution equations are obtained. Our results include the classical heat equation,
the harmonic oscillator evolution equation
$$\partial_tu=\Delta u-|x|^2u+f,$$
and their corresponding Cauchy problems. We also show weighted mixed-norm estimates
for solutions to degenerate parabolic extension problems arising in connection with the fractional 
space-time nonlocal equations $(\partial_t-\Delta)^su=f$ and $(\partial_t-\Delta+|x|^2)^su=f$, for $0<s<1$.
\end{abstract}
%%%%%%%%%%%%%%%%%%%%%%%%%%%%%%%%%%%%%%%%%%%%%%%%%%%%%%

\maketitle

%%%%%%%%%%%%%%%%%%%%%%%%%%%%%%%%%%%%%%%%%%%%%%%%%%%%%%
\section{Introduction}
%%%%%%%%%%%%%%%%%%%%%%%%%%%%%%%%%%%%%%%%%%%%%%%%%%%%%%

The theory of elliptic PDEs received an unexpected major impulse in 1952 with the fundamental work
of A. P. Calder\'on and A. Zygmund \cite{Calderon-Zygmund} on Sobolev \textit{a priori} $W^{2,p}$ estimates of solutions.
Calder\'on and Zygmund exploited their ideas to cover a large class of PDEs.
In particular, if $P(D)$ is a linear homogeneous partial differential operator of order $m$ with smooth coefficients
then $P=H\Lambda^m$ where $\Lambda=(-\Delta)^{1/2}$, the square root of the Laplacian,
and $H$ is a singular integral operator.
Regarding this property they asserted in \cite{Calderon-Zygmund 2}:
``This fact seems to call for a closer study of the properties of singular integral operators
in their connection with the operator $\Lambda$''.

After the appearance of \cite{Calderon-Zygmund}
some attempts were made to obtain Sobolev estimates for solutions of
parabolic PDEs. Probably the most known work is the 1964 paper
by B. F. Jones \cite{Jones}. Jones studies parabolic problems of
the form $\partial_tu=(-1)^{m/2}P(D)u+f$, for $t>0$, $x\in\R^n$, with $u(0,x)=0$,
where $P(\xi)= P(\xi_1,\dots,\xi_n)$ is a homogeneous polynomial
of even degree $m$, such that $P(\xi)$ has negative real part for real $\xi$.
The parabolic Calder\'on--Zygmund estimate in this case says that if
$f\in L^p(\R^{n+1}_+)$, where $\R^{n+1}_+:=(0,\infty)\times\R^{n}$, for $1<p<\infty$,
then $\partial_tu,D^2u\in L^p(\R^{n+1}_+)$.
In 1966, E. B. Fabes extended in \cite{Fabes} the results to variable kernel operators
and provided new applications. That same year, E. B. Fabes and C. Sadosky
proved an almost everywhere convergence result of second derivatives $D^2u$
when $f\in L^p(\R^{n+1}_+)$ and $1<p<\infty$, see \cite{Fabes-Sadosky}.

Apart from the series of 1960's papers mentioned above, there have not been many
investigations on the parabolic Calder\'on--Zygmund theory during the last century.
In general, in specific books like \cite{Krylov book, Ames} there are only a few comments or small related chapters.
This is a surprisingly big difference with respect to the case of elliptic PDEs.
Special mention deserve the early 2000's papers by N. V. Krylov \cite{Krylov0, Krylov} (see also references therein)
where mixed norm estimates $L^q_t(W_x^{2,p})$ and $L^p_t(C_x^{2,\alpha})$ for parabolic equations were obtained.
It has brought again some of the primitive ideas of Calder\'on and Zygmund to the present times.

In this paper we aim to show weighted mixed-norm $L^q_t(L^p_x)$ and $L^p_{t,x}$ estimates,
weighted mixed weak-type estimates and a.e.~convergence results
of singular integrals for the following parabolic equations:
the heat equation
\begin{equation}\label{EQ1}
\partial_tu=\Delta u+f,\quad\hbox{in}~\R^{n+1},
\end{equation}
the harmonic oscillator evolution equation
\begin{equation}\label{para}
\partial_tu=\Delta u-|x|^2u+f,\quad\hbox{in}~\R^{n+1},
\end{equation}
and their corresponding Cauchy problems in $\R^{n+1}_+$.
We also prove similar estimates for some degenerate parabolic extension equations
connected with the fractional space-time nonlocal equations $(\partial_t-\Delta)^su=f$ and $(\partial_t-\Delta+|x|^2)^su=f$
in $\R^{n+1}$, see \eqref{eq:extension Delta} and \eqref{eq:extension H}. The latter
are of particular interest in regularity theory of
space-time fractional nonlocal PDEs, see \cite{Stinga-Torrea-s} and references therein.

The weights appearing in our first two main statements are the usual Muckenhoupt weights on $\R$ and $\R^n$.
We refer the reader to the book by J. Duoandikoetxea \cite[Chapter~7]{Duo} for definition and properties of the $A_p$ classes
and to Section \ref{pre} for the necessary notation. Here is our first main result.

\begin{thm}[Mixed-norm Sobolev estimates with weights]\label{mixriesz}
Let $f\in L^q(\R,\nu;L^p(\R^{n},\omega))$ for some $1\leq p,q<\infty$, where
$\nu\in A_q(\R)$ and $\omega \in A_p(\R^n)$. Let $u$ be either a solution to the
heat equation \eqref{EQ1} or to the harmonic oscillator evolution equation \eqref{para} in $\R^{n+1}$.
If $1<p,q<\infty$ then
$$\|\partial_{ij}u\|_{L^q(\R,\nu;L^p(\R^{n},\omega))}+\|\partial_tu\|_{L^q(\R,\nu;L^p(\R^{n},\omega))}\le
C_{n,p,q,\nu,\omega}\|f\|_{L^q(\R,\nu;L^p(\R^{n},\omega))}.$$
If $q=1$ and $1 < p<\infty$ then a weak-type estimate holds: for any $\lambda>0$, 
$$\nu\big( \{t \in\R:\|\partial_{ij}u(t,\cdot)\|_{L^p(\R^{n},\omega)}+\|\partial_{t}u(t,\cdot)\|_{L^p(\R^{n},\omega)}>\lambda\}\big)\le\frac{C_{n,p,\nu,\omega}}{\lambda}\|f\|_{L^1(\R,\nu;L^p(\R^{n},\omega))}.$$
\end{thm}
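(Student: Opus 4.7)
The plan is to represent the maps $f\mapsto\partial_{ij}u$ and $f\mapsto\partial_tu$ as vector-valued singular integrals in the time variable, with an operator-valued kernel taking values in $\mathcal{B}(L^p(\R^n,\omega))$, and then apply the Benedek--Calder\'on--Panzone vector-valued Calder\'on--Zygmund theorem in its one-weight, one-dimensional form on $(\R,\nu)$.

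First I would solve the equation via the semigroup: $u(t,\cdot)=\int_{-\infty}^{t}e^{(t-s)L}f(s,\cdot)\,ds$, where $L=\Delta$ for \eqref{EQ1} and $L=-H$ with $H=-\Delta+|x|^2$ for \eqref{para}. I would then factor the spatial second derivatives as
\[
\partial_{ij}(\partial_t-L)^{-1}=R_iR_j\circ(-L)(\partial_t-L)^{-1},
\]
where $R_iR_j=\partial_i\partial_j(-L)^{-1}$ are either the classical Riesz transforms (heat case) or the Hermite Riesz transforms (oscillator case); both are known to be bounded on $L^p(\R^n,\omega)$ for every $1<p<\infty$ and every $\omega\in A_p(\R^n)$, which I take as black-box input. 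This reduces the whole question to the boundedness on $L^q(\R,\nu;L^p(\R^n,\omega))$ of the two time operators
\[
T_1f(t,\cdot)=\int_{-\infty}^{t}(-L)e^{(t-s)L}f(s,\cdot)\,ds,\qquad T_2f(t,\cdot)=\partial_t\!\int_{-\infty}^{t}e^{(t-s)L}f(s,\cdot)\,ds.
\]

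I would view $T_1,T_2$ as convolution operators in $t$ with operator-valued kernels $K_j(\tau)$ in $\mathcal{B}(L^p(\R^n,\omega))$. Using the semigroup identity $\partial_\tau e^{\tau L}=Le^{\tau L}$ together with heat-kernel or Mehler-kernel estimates, one checks the size and regularity bounds $\|K_j(\tau)\|\leq C|\tau|^{-1}$ and $\|\partial_\tau K_j(\tau)\|\leq C|\tau|^{-2}$, which are precisely the Calder\'on--Zygmund kernel conditions in one time dimension. An initial unweighted bound on $L^p(\R;L^p(\R^n,\omega))$ can be obtained by writing $T_1=\Id-T_2$ and applying a Littlewood--Paley / $g$-function argument to the semigroup (as is standard in the ``language of semigroups''), or, in the heat case, by a Plancherel computation at $p=q=2$ combined with Rubio de Francia extrapolation. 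Feeding this initial bound together with the kernel conditions into the vector-valued Calder\'on--Zygmund theorem yields strong boundedness on $L^q(\R,\nu;L^p(\R^n,\omega))$ for all $1<q<\infty$ and $\nu\in A_q(\R)$, as well as the weighted weak type $(1,1)$ in the time variable needed for the case $q=1$.

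The main obstacle will be verifying the operator-valued Calder\'on--Zygmund kernel bounds \emph{uniformly} in the spatial weight $\omega\in A_p(\R^n)$: one has to control $\|(-L)e^{\tau L}\|$ and $\|\partial_\tau(-L)e^{\tau L}\|$ on every weighted space, not just on unweighted $L^p(\R^n)$. For $L=\Delta$ this can be achieved from pointwise bounds on derivatives of the Gaussian combined with maximal-function domination and the standard $A_p$ theory. For $L=-H$ the Mehler kernel is not of convolution type, so one has to split it into a local part behaving like the Gaussian at small times (where classical $A_p$ Calder\'on--Zygmund theory applies) and a global part with rapid off-diagonal decay (handled directly by pointwise estimates), in the spirit of the Hermite--Riesz transform machinery. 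Assembling these pieces delivers both the strong and the weak-type statements of the theorem.
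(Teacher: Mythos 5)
Your strategy coincides with the paper's at the top level: both proofs reduce the mixed-norm estimate to a one-dimensional vector-valued Calder\'on--Zygmund argument in the time variable with an operator-valued kernel acting on $L^p(\R^n,\omega)$, and both need size and smoothness bounds of order $\tau^{-1}$ and $\tau^{-2}$ on that kernel. The genuinely new move you make is the factorization $\partial_{ij}(\partial_t-L)^{-1}=\big(\partial_{ij}(-L)^{-1}\big)\circ\big((-L)(\partial_t-L)^{-1}\big)$ together with $T_1=\mathrm{Id}-T_2$, which reduces everything to the single time transform $\partial_t(\partial_t-L)^{-1}$ after importing the $A_p(\R^n)$-weighted boundedness of the spatial (Hermite--)Riesz transforms. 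This is a real economy, at the price of an extra black-box input (the weighted $L^p(\R^n,\omega)$ bound for the second-order Hermite--Riesz transforms), which the paper does not need: it treats each $R_{ij}$ and $R_t$ on an equal footing as a parabolic Calder\'on--Zygmund operator on $\R^{n+1}$.

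Where the proposal is thinner than the paper's proof is the initial $L^{p_0}$ bound required by the vector-valued theorem. The paper gets the $p_0=p$ input for free: Theorems~\ref{fundamental 1} and \ref{fundamental1} show the parabolic Riesz transforms are Calder\'on--Zygmund operators on $(\R^{n+1},d,dt\,dx)$, hence bounded on $L^p(\R^{n+1},w)$ for every $w\in A_p^*(\R^{n+1})$; choosing the tensor weight $w=\nu\otimes\omega$, which Remark~\ref{product} places in $A_p^*(\R^{n+1})$, gives boundedness on $L^p(\R,\nu;L^p(\R^n,\omega))$ outright. Your two alternatives are only sketched: ``Plancherel at $p=q=2$ plus Rubio de Francia extrapolation'' does not by itself produce weighted mixed-norm bounds, since extrapolation starts from a one-parameter family of weighted estimates, not from a single unweighted $L^2$ one, so one still has to run the parabolic Calder\'on--Zygmund machinery on $\R^{n+1}$, which is exactly what the paper does; and the Littlewood--Paley $g$-function route would need a separate setup for the Hermite semigroup. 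Two smaller corrections. Your flagged ``main obstacle,'' proving the kernel bounds \emph{uniformly} in $\omega\in A_p(\R^n)$, is not actually required; the constants are permitted to depend on the $A_p$ characteristic, as the conclusion $C_{n,p,q,\nu,\omega}$ makes explicit. And the paper obtains $\|K(\tau)\|_{\mathcal{L}(L^p(\R^n,\omega))}\le C_\omega\tau^{-1}$ not by maximal-function domination but by checking that for each fixed $\tau$ the spatial kernel is a scalar Calder\'on--Zygmund kernel on $\R^n$ whose constants scale like $\tau^{-1}$, with the Mehler kernel reduced to the Gaussian via the pointwise bound $\mathcal{W}_\tau(x,y)\le C\tau^{-n/2}e^{-|x-y|^2/(c\tau)}$ stated in the proof of Theorem~\ref{Poisson1}; no separate local/global split of the Mehler kernel is needed at this stage.
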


The strong estimate in Theorem \ref{mixriesz} for $D^2u$ and $\partial_t u$ in the unweighted case 
$\nu=\omega=1$ and when $u$ is a solution to the heat equation is already contained in a work 
by Krylov, see \cite{Krylov0} and references therein.
The rest of the estimates are completely new. In particular, we obtain the endpoint case $q=1$.
Notice also that the lower order coefficient in \eqref{para} is unbounded on $\R^{n+1}$.
In \cite{Hieber}, R. Haller-Dintelmann, H. Heck and M. Hieber 
proved $L^q([0,\infty);(L^p(\R^n,w))^N)$ estimates for solutions of non-divergence form
$N\times N$ parabolic systems of order $m$ on $\R^n$ with top-order coefficients
of class $VMO\cap L^\infty$, where
$1<p,q<\infty$ and $w\in A_p(\R^n)$. Observe that in such result for parabolic
systems with time independent variable coefficients
the mixed-norm estimate does not include weights with respect to the time variable nor the endpoint $q=1$.

The previous works on the parabolic Calder\'on--Zygmund theory
did not deal at any moment with the square root operator $(\partial_t-\Delta)^{1/2}$.
Recall that in the elliptic Calder\'on--Zygmund calculus the operator $\Lambda=(-\Delta)^{1/2}$
played a key role \cite{Calderon-Zygmund, Calderon-Zygmund 2}.
Very recently in \cite{Stinga-Torrea-s} a quite deep and complete analysis of
solutions to the fractional nonlocal equation $(\partial_t-\Delta)^su=f$ on $\R^{n+1}$, for $0<s<1$, was performed.
The fractional powers of the heat operator $(\partial_t-\Delta)^s$ can be characterized by
a degenerate parabolic extension problem in one more dimension.
Indeed, let $u=u(t,x)$ be a (smooth) bounded function on $\R^{n+1}$.
If $U=U(t,x,y)$ is a solution to the degenerate parabolic equation
\begin{equation}\label{eq:extension Delta}
\begin{cases}
\partial_tU=y^{-(1-2s)}\dive_{x,y}(y^{1-2s}\nabla_{x,y}U),&\hbox{for}~(t,x)\in\R^{n+1},~y>0,\\
U(t,x,0)=u(t,x),&\hbox{for}~(t,x)\in\R^{n+1},
\end{cases}
\end{equation}
then, for every $(t,x)\in\R^{n+1}$,
$$-\lim_{y\to0^+}y^{1-2s}U_y(t,x,y)=c_s(\partial_t-\Delta)^su(t,x),$$
where $c_s>0$ is an explicit constant.
Moreover, $U$ is given by the \textit{Poisson formula}
\begin{equation}\label{eq:Poisson operator Laplaciano}
U(t,x,y)\equiv P_{\Delta,y}^su(t,x)=\frac{y^{2s}}{4^s\Gamma(s)} \int_0^\infty e^{-y^2/(4\tau)}
e^{-\tau(\partial_t-\Delta)}u(t,x)\,\frac{d\tau}{\tau^{1+s}}.
\end{equation}
For all these details see \cite{Stinga-Torrea-s}.
Observe that when $s=1/2$ the operator $P_{\Delta,y}^{1/2}u$ can be thought as a subordinated
Poisson semigroup parallel to the one arising in elliptic PDEs \cite{Stein,Stempak-Torrea,Thangavelu}.
In a similar fashion, we define the Poisson operator related to the fractional power $(\partial_t-\Delta+|x|^2)^s$ as
\begin{equation}\label{eq:Poisson operator Hermite}
V(t,x,y)\equiv P^{s}_{H,y}u(t,x)=
\frac{y^{2s}}{4^s\Gamma(s)}\int_0^\infty e^{-y^2/(4\tau)}e^{-\tau(\partial_t-\Delta+|x|^2)}u(t,x)
\,\frac{d\tau}{\tau^{1+s}}.
\end{equation}
It can be checked that $V$ solves the degenerate parabolic equation with unbounded lower order term
\begin{equation}\label{eq:extension H}
\begin{cases}
\partial_tV=y^{-(1-2s)}\dive_{x,y}(y^{1-2s}\nabla_{x,y}V)+|x|^2V,&\hbox{for}~(t,x)\in\R^{n+1},~y>0,\\
V(t,x,0)=u(t,x),&\hbox{for}~(t,x)\in\R^{n+1},
\end{cases}
\end{equation}
and that
$$-\lim_{y\to0^+}y^{1-2s}V_y(t,x,y)=c_s(\partial_t-\Delta+|x|^2)^su(t,x).$$
To present our second main novel result, let us
denote by $\mathcal{P}^s_yu(t,x)$, $y>0$, any of the Poisson operators \eqref{eq:Poisson operator Laplaciano}
or \eqref{eq:Poisson operator Hermite}. Define the maximal operators as
$$\mathcal{P}^{s,\ast}u(t,x):=\sup_{y>0}|\mathcal{P}_y^su(t,x)|,\quad\hbox{for}~(t,x)\in\R^{n+1}.$$
As it is well known, these maximal operators are important to understand convergence of
the solutions $U$ and $V$ to the initial data $u$ or, in an equivalent way as evidenced
by the extension problems \eqref{eq:extension Delta} and \eqref{eq:extension H}, to solve the fractional
space-time nonlocal equations above.

\begin{thm}[Mixed-norm estimates with weights for extensions]\label{Poisson}
Let $u\in L^q(\R,\nu;L^p(\R^n,\omega))$ for $1\leq p,q<\infty$, where
$\nu\in A_q(\R)$ and $\omega\in A_p(\R^n)$.
If $1<p, q<\infty$ then 
$$\|\mathcal{P}^{s,\ast}u\|_{L^q(\R,\nu;L^p(\mathbb{R}^n,\omega))}\le
C_{n,p,q,\nu,\omega}\|u\|_{L^q(\R,\nu;L^p(\mathbb{R}^n,\omega))}.$$
If $q=1$ and $1< p <\infty$ then a weak-type estimate holds: for any $\lambda>0$, 
$$\nu\big(\{t\in\R:\|\mathcal{P}^{s,\ast}u(t,\cdot)\|_{L^p(\mathbb{R}^n,\omega)}>\lambda\}\big)
\le\frac{C_{n,p,\nu,\omega}}{\lambda}\|u\|_{L^1(\R,\nu;L^p(\R^{n},\omega))}.$$
\end{thm}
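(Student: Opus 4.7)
The plan is to dominate $\mathcal{P}^{s,\ast}u$ pointwise by the iterated Hardy--Littlewood maximal function $M_t(M_xu)$ and then apply the weighted maximal theorem in each variable separately. Using \eqref{eq:Poisson operator Laplaciano} and \eqref{eq:Poisson operator Hermite}, I first write
\[
\mathcal{P}^s_yu(t,x)=\int_0^\infty\eta_{y,s}(\tau)\,e^{-\tau L}u(t,x)\,d\tau,\qquad\eta_{y,s}(\tau):=\frac{y^{2s}}{4^s\Gamma(s)}\tau^{-1-s}e^{-y^2/(4\tau)},\;\tau>0,
\]
with $L=\partial_t-\Delta$ or $L=\partial_t-\Delta+|x|^2$. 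A direct computation gives $e^{-\tau L}u(t,x)=\int_{\R^n}K_\tau(x,z)\,u(t-\tau,z)\,dz$, where $K_\tau$ is the Gaussian heat kernel $W_\tau$ in the first case and the Mehler kernel in the second. In the Mehler case, the Feynman--Kac formula combined with the non-negativity of the potential $|x|^2$ yields the pointwise bound $K_\tau(x,z)\le W_\tau(x-z)$. Since $W_\tau$ admits an integrable radially decreasing majorant, the standard approximate-identity argument produces
\[
|e^{-\tau L}u(t,x)|\le C\,M_x|u|(t-\tau,x),\qquad\hbox{uniformly in}~\tau>0,
\]
where $M_x$ is the Hardy--Littlewood maximal function in the $x$-variable.

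A change of variables next exhibits the scaling $\eta_{y,s}(\tau)=y^{-2}\Phi_s(\tau/y^2)$, with $\Phi_s(r):=(4^s\Gamma(s))^{-1}r^{-1-s}e^{-1/(4r)}$ for $r>0$ a probability density on $(0,\infty)$ that possesses an integrable non-increasing majorant. Consequently, for any $v\geq0$, the supremum over $y>0$ of the one-sided convolutions $\int_0^\infty\eta_{y,s}(\tau)v(t-\tau)\,d\tau$ is pointwise dominated by the one-sided (and hence also the two-sided) Hardy--Littlewood maximal function $M_tv(t)$. Applying this with $v(s)=M_x|u|(s,x)$ for each fixed $x$ and combining with the previous step gives the key pointwise estimate
\[
\mathcal{P}^{s,\ast}u(t,x)\le C\,M_t\bigl(M_x|u|\bigr)(t,x),\qquad(t,x)\in\R^{n+1}.
\]

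To conclude, I would apply weighted maximal inequalities iteratively. The Muckenhoupt theorem gives $\|M_xg\|_{L^p(\R^n,\omega)}\le C\|g\|_{L^p(\R^n,\omega)}$ uniformly in $t$, for $\omega\in A_p(\R^n)$, $1<p<\infty$. The remaining task is to show that the pointwise-in-$x$ operator $M_t$ is bounded on $L^q(\R,\nu;L^p(\R^n,\omega))$ for $\nu\in A_q(\R)$, $1<q<\infty$, and of weak-type $(1,1)$ in the $t$-variable for $q=1$; this is the main obstacle. Indeed, the naive inequality $\|M_tF(t,\cdot)\|_{L^p(\omega)}\le M_t\bigl(\|F(\cdot,\cdot)\|_{L^p(\omega)}\bigr)(t)$ fails, since $L^p$-norms do not commute with suprema. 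I would overcome this by invoking the Banach-lattice-valued Hardy--Littlewood theorem, valid because $L^p(\R^n,\omega)$ is a UMD space for $1<p<\infty$; equivalently, one can apply Rubio de Francia extrapolation to the scalar weighted bound for $M_t$. Composing this vector-valued estimate with the $L^p(\omega)$-bound for $M_x$ and the pointwise bound above yields both the strong inequality for $1<p,q<\infty$ and the weak-type estimate for $q=1$, $1<p<\infty$.
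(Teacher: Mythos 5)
Your argument is correct and takes a genuinely different route from the paper. You reduce $\mathcal{P}^{s,\ast}$ pointwise to the iterated maximal function $M_t\circ M_x$: the Gaussian domination of the Mehler kernel (either by the explicit comparison $\coth\tau\ge1/\tau$, $\sinh2\tau\ge2\tau$, $\tanh\tau\ge0$, or by Feynman--Kac positivity of the potential) gives $|e^{-\tau L}u|\le CM_x|u|$, and the scaling structure $\eta_{y,s}(\tau)=y^{-2}\Phi_s(\tau/y^2)$ with $\Phi_s$ having an integrable decreasing majorant gives the $M_t$-control of the $y$-supremum; then the estimate follows from the weighted boundedness of $M_x$ on $L^p(\omega)$ together with a vector-valued (Fefferman--Stein/lattice) weighted bound for $M_t$ on $L^q(\R,\nu;L^p(\omega))$. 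The paper instead iterates vector-valued Calder\'on--Zygmund theory: first on the parabolic space $(\R^{n+1},d,dtdx)$ to get the diagonal case (Theorem~\ref{Poisson1}) and then on $\R$ with $\mathcal{L}(L^p(\omega),L^p(\omega;L^\infty(0,\infty)))$-valued kernels, whose operator-norm size and smoothness estimates (bounds $Ct^{-1}$ and $Ct^{-2}$) are produced via Young's inequality and the $\R^n$ Calder\'on--Zygmund theory. Your approach is leaner in that it only needs size (Gaussian) bounds on the kernel, not smoothness, and bypasses the kernel computations of Section~\ref{mix}; but it shifts the real work onto the vector-valued maximal inequality for $M_t$ on $L^q(\nu;L^p(\omega))$, which is a nontrivial black box whose proof itself typically proceeds by Calder\'on--Zygmund decomposition or Rubio de Francia extrapolation. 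Two minor imprecisions: the relevant Banach-lattice condition for the lattice maximal operator is the Hardy--Littlewood property, not UMD (for $L^p$ with $1<p<\infty$ both hold, so your conclusion is unaffected), and for the weak-type endpoint $q=1$ you should explicitly invoke the weak $(1,1)$ version of the lattice maximal theorem with $\nu\in A_1(\R)$ rather than rely on extrapolation, which does not reach $q=1$ directly. With those caveats made precise, the argument is complete and gives exactly the stated strong and weak mixed-norm bounds.
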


The last two main new results of this paper regard weighted estimates in parabolic Sobolev spaces
and a.e.~convergence of principal values for singular integrals.
Recall that the natural geometric setting for uniformly parabolic equations is given by the space
$\R^{n+1}$ endowed with the parabolic distance \eqref{eq:parabolic distance} and the Lebesgue measure.
These ingredients form a so-called \textit{space of homogeneous type}. The ``cubes'' in this geometry
are given by the parabolic distance. Therefore the class of Muckenhoupt weights defined in this setting,
and which we denote by $A_p^*(\R^{n+1})$, form the suited class for the non-mixed-weighted norm scenario.
Observe that this class is different from the usual $A_p(\R^{n+1})$ class. Moreover,
the next results do not follow from our previous Theorem \ref{mixriesz}. Here we present the estimates
for the harmonic oscillator evolution equation \eqref{para}, the case of the usual heat equation is contained
in Subsections \ref{subsection:1} and \ref{subsection:2}. The first result is for
the global equation in \eqref{para}.

\begin{thm}[Equation on the whole space]\label{fundamental1}
Let $ \mathcal{W}_\tau(x,y)$, $x,y\in\R^n$, $\tau>0$,
be the Mehler kernel of the heat semigroup generated by the harmonic oscillator
$H=-\Delta+|x|^2$ on $\R^n$, see \cite{Thangavelu} and \eqref{Hheat}.
\begin{itemize}
\item[({\bf A})]{\bf Classical solvability.}
Let $f=f(t,x)$ be a bounded function on
$\mathbb{R}^{n+1}$ with compact support. For every $(t,x) \in \mathbb{R}^{n+1} $ the integral  
$$u(t,x) =   \int_0^\infty \int_{\mathbb{R}^n} \mathcal{W}_\tau(x,y) f(t-\tau,y) \,dy\, d\tau,$$
is well defined. Moreover, if $f$  is a  $C^2$ function then $u$ is a classical solution to \eqref{para}.
In this case the following pointwise limits hold:
 \begin{equation}\label{equation x}
\partial_{ij}u(t,x)=\lim_{\varepsilon\rightarrow 0}\iint_{\Omega_\varepsilon(x)}\partial_{y_iy_j}\mathcal{W}_\tau(x,y)
f(t-\tau,y)\,dy\,d\tau-A_nf(t,x)\delta_{ij},
\end{equation}
and
\begin{equation}\label{equation t}
\partial_{t}u(t,x) =\lim_{\varepsilon \rightarrow 0}\iint_{\Omega_\varepsilon(x)}
\partial_{\tau}\mathcal{W}_\tau(x,y)f(t-\tau,y)\,dy\, d\tau+ B_n f(t,x).
\end{equation}
Here $A_n$ and $B_n$ are the explicit constants given in \eqref{eq:AyB} and, for $\varepsilon>0$,
$$\Omega_\varepsilon(x) = \{(t,y): {\rm max}( |t|^{1/2}, |x-y|) >\varepsilon\}.$$
\item[({\bf B})] {\bf Weighted parabolic Sobolev estimates.}   
Let $f\in L^p(\R^{n+1},w)$, for $1\le p<\infty$, where $w$ belongs to the parabolic
Muckenhoupt class $A_p^*(\R^{n+1})$ mentioned above. Then the limits
\eqref{equation x} and \eqref{equation t} exist for a.e.~$(t,x)\in\R^{n+1}$.
Moreover, the following weighted Sobolev a priori estimates hold: when $1<p<\infty$,
$$\|\partial_{ij}u\|_{L^p(\R^{n+1},w)}+\|\partial_tu\|_{L^p(\R^{n+1},w)}\leq C_{n,p,w}\|f\|_{L^p(\R^{n+1},w)}, $$
and, when $p=1$, for any $\lambda>0$,
$$w\big(\{(t,x)\in\R^{n+1}:|\partial_{ij}u|+|\partial_tu|>\lambda\}\big)
\le\frac{C_{n,w}}{\lambda}\|f\|_{L^1(\R^{n+1}, w)}.$$
\end{itemize}
\end{thm}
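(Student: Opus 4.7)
The plan is to prove (A) by a direct analysis of the Mehler kernel and (B) by realizing the principal-value operators as Calder\'on--Zygmund singular integrals on the parabolic space of homogeneous type $(\R^{n+1}, d, dt\,dx)$, where $d$ is the parabolic distance \eqref{eq:parabolic distance}.

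For (A), well-definedness is immediate: since $f$ is bounded with compact support, the $\tau$-integration is effectively over a bounded interval on which the classical pointwise estimates on $\mathcal{W}_\tau(x,y)$ (see \cite{Thangavelu}) make the double integral absolutely convergent. Assume $f\in C^2$. First, I would check that $u$ solves \eqref{para} in the classical sense by differentiating under the integral in $t$, writing $\partial_t f(t-\tau,y) = -\partial_\tau f(t-\tau,y)$, integrating by parts in $\tau$, and using $\partial_\tau\mathcal{W}_\tau = -H_y\mathcal{W}_\tau$ together with $\lim_{\tau\to 0^+}e^{-\tau H}g = g$: the boundary term produces $f(t,x)$ and the interior term produces $-Hu(t,x) = \Delta u - |x|^2 u$.

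To obtain the principal-value formula \eqref{equation x}, I would split $u = u_\varepsilon^{\rm out}+u_\varepsilon^{\rm in}$ corresponding to the regions $\Omega_\varepsilon(x)$ and its complement $B_\varepsilon$. After a change of variables $y = x+z$ the domain becomes $x$-independent, so $\partial_{x_i x_j}$ passes inside the integral on $\Omega_\varepsilon$; integration by parts in $y$ converts the $x$-derivatives landing on $\mathcal{W}_\tau$ into $y$-derivatives, modulo boundary contributions on $\{\max(\tau^{1/2},|x-y|)=\varepsilon\}$. For $u_\varepsilon^{\rm in}$, I would write $f(t-\tau,y) = f(t,x) + [f(t-\tau,y)-f(t,x)]$; the second piece is $o_\varepsilon(1)$ thanks to $f\in C^2$, while the first piece factors $f(t,x)$ out of an explicit kernel integral over a parabolic ball. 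Using the short-time expansion $\mathcal{W}_\tau(x,y) = W_\tau(x-y) + (\text{regular remainder})$, that integral tends, as $\varepsilon\to0$, to $-A_n\delta_{ij}f(t,x)$, the off-diagonal entries $i\ne j$ vanishing by symmetry. A parallel computation, this time integrating by parts in $\tau$ instead of $y$, yields \eqref{equation t} with the constant $B_n$.

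For (B), the principal-value integrals in (A) define linear operators $T_{ij}$ and $T_t$ on $\R^{n+1}$ with kernels
$$K_{ij}((t,x);(s,y)) = \partial_{y_iy_j}\mathcal{W}_{t-s}(x,y)\chi_{\{t>s\}},\qquad K_t((t,x);(s,y)) = \partial_\tau\mathcal{W}_{t-s}(x,y)\chi_{\{t>s\}}.$$
The strategy is to verify that these are Calder\'on--Zygmund kernels on the parabolic space of homogeneous type: the size bound $|K|\lesssim d((t,x),(s,y))^{-(n+2)}$ and H\"older-type smoothness estimates in both entries relative to $d$. $L^2(\R^{n+1})$-boundedness of $T_{ij}$ and $T_t$ is supplied by Theorem \ref{mixriesz} in the unweighted case $p=q=2$. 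With these two ingredients at hand, the standard Calder\'on--Zygmund theorem on spaces of homogeneous type with weights $w\in A_p^*(\R^{n+1})$ delivers the strong-type estimate for $1<p<\infty$ and the weak-type estimate for $p=1$. A.e.~existence of the principal values follows from a Cotlar-type inequality for the maximal truncated singular integral, together with its weighted weak-type bound and a.e.~convergence on the dense class of $C_c^2$ functions supplied by part (A).

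The main obstacle will be the verification of the parabolic Calder\'on--Zygmund kernel estimates. Unlike the Gauss kernel, $\mathcal{W}_\tau(x,y)$ is not translation invariant and its explicit form involves the hyperbolic functions $\sinh 2\tau$, $\cosh 2\tau$; I would split the analysis into the regimes $0<\tau\le 1$ and $\tau>1$. In the first, the Mehler kernel and its derivatives are controlled by Gaussian perturbations of $W_\tau(x-y)$, so the classical heat-kernel bounds yield both the size and H\"older estimates with respect to $d$. In the second, the exponential decay of $\mathcal{W}_\tau$ in $\tau$ makes the corresponding tails of $T_{ij}$ and $T_t$ bounded directly on every $L^p(w)$, bypassing singular-integral theory altogether. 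The computation of the explicit constants $A_n$ and $B_n$ is independent of the weighted estimates and is carried out via the same short-time heat-kernel asymptotics.
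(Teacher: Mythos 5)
Your plan for Part (A) is workable and broadly in line with the paper's proof: the paper also integrates by parts in $y$ and $\tau$ over $\Omega_\varepsilon$, analyzes the boundary contributions on $\{\max(\tau^{1/2},|x-y|)=\varepsilon\}$, and shows that near the singularity the Mehler kernel reproduces the Gauss--Weierstrass kernel so that the same constants $A_n$, $B_n$ of \eqref{eq:AyB} appear. The paper does this by writing $\mathcal{W}_\tau(x,x-y)=S(\tau)H(\tau,y)G(\tau,x,y)$ with $H=\exp(-\tfrac14|y|^2\coth\tau)$ and $G=\exp(-\tfrac14|2x-y|^2\tanh\tau)$, and by quantitative Mean Value Theorem comparisons (Lemma \ref{lema2}) such as replacing $(\sinh\tau,\cosh\tau)$ by $(\tau,1)$. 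Your "short-time expansion plus regular remainder" is essentially this comparison, but to extract the explicit constants you must carry it out on $\partial_{y_iy_j}\mathcal{W}_\tau$ and $\partial_\tau\mathcal{W}_\tau$, which is the technical heart of (A); left at the level you state it, it is a plan rather than a proof.

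The genuine gap is in Part (B). You propose to obtain the $L^2(\R^{n+1})$-boundedness of the singular integrals from "Theorem \ref{mixriesz} in the unweighted case $p=q=2$." This is circular: in the paper, Theorem \ref{mixriesz} is proved \emph{from} Theorem \ref{fundamental1}, not before it, by first establishing the weighted $L^p(\R^{n+1},w)$ Sobolev estimates for $R^H_{ij}$ and $R^H_t$ and then upgrading to mixed norms via an operator-valued kernel argument. Some independent route to an initial $L^{p_0}$ bound is unavoidable, and the Fourier multiplier argument that works for the heat equation fails here because $R^H_{ij}$ is not translation invariant. The paper supplies this missing ingredient as Theorem \ref{L2}: $L^2(\R^{n+1})$-boundedness is proved directly by expanding in the Hermite basis, reducing $R^H_t$ to the uniform bound on $\xi(i\xi+2|\alpha|+n)^{-1}$ and $R^H_{ij}$ to the boundedness of $H(\partial_t+H)^{-1}$ (via the known $L^2(\R^n)$-boundedness of second-order Hermite--Riesz transforms) together with the uniform bound on $(2|\alpha|+n)(i\xi+2|\alpha|+n)^{-1}$. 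Without such an argument, the Calder\'on--Zygmund machinery on the parabolic space of homogeneous type cannot be set in motion. Once this hole is filled, the remainder of your plan --- verification of the size and parabolic H\"older estimates for the kernels in the regimes $\tau\le1$ and $\tau>1$, weighted strong and weak-type bounds, and a.e. convergence via the maximal truncated operator and the dense class from (A) --- is indeed the paper's route.
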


We point out that the global results we prove here for \eqref{para}, even in the unweighted case, are not covered by the
standard theory of parabolic equations, see \cite{Krylov book, Ames}. In particular, the lower order
coefficient $|x|^2$, though smooth, is not bounded. This coefficient will drive us to an essential use of Hermite functions
and the Mehler kernel of the Hermite semigroup, see \cite{Thangavelu}. We stress out the fact that we
are able to show the a.e.~convergence of the limits in \eqref{equation x} and \eqref{equation t}
for $p=1$, a result not contained in the previous literature
\cite{Fabes, Fabes-Sadosky, Jones, Krylov book, Krylov0, Ames}.

Our last main result regards parabolic Sobolev estimates with weights for the Cauchy problem
\begin{equation}\label{CauchyP}
\begin{cases}
    \partial_t v=\Delta v-|x|^2v+f,&\hbox{for}~t >0,~x\in\R^n, \\
    v(0,x) = g(x),&\hbox{for}~x \in\R^n.
\end{cases}
\end{equation}
Again our results are presented for \eqref{CauchyP}, though the same statement holds for solutions
to the Cauchy problem for the heat equation, see Subsections \ref{subsection:1} and \ref{subsection:2}.

\begin{thm}[Cauchy problem in the upper half space]\label{fundamental3}
Consider the Cauchy problem \eqref{CauchyP}.
\begin{itemize}
\item[({\bf A})]{\bf Classical solvability.}
Let $g=g(x)$ (resp. $f=f(t,x)$) be a bounded function
with compact support in $\mathbb{R}^n$ (resp., in $\mathbb{R}^{n+1}_+$).
For every $(t,x)\in\mathbb{R}_+^{n+1} $ the integrals  
\begin{equation}\label{duhamel}v(t,x)=\int_0^t\int_{\R^n}\mathcal{W}_\tau(x,y)f(t-\tau,y)\,dy\,d\tau+
\int_{\R^n}\mathcal{W}_t(x,y)g(y)\,dy,
\end{equation}
are well defined.
Moreover, if $f$ and $g$ are $C^2$ functions
then $v$ is a classical solution to \eqref{CauchyP}. In this case the following pointwise limits hold:
 \begin{equation}\label{equation1 x}
\partial_{ij}v(t,x)=\lim_{\varepsilon\to0}\int_0^{t-\varepsilon}\int_{\R^n}\partial_{y_iy_j}\mathcal{W}_\tau(x,y)
f(t-\tau,y)\,dy\,d\tau+\int_{\R^n} \partial_{y_iy_j}\mathcal{W}_t(x,y) g(y) \,dy,
\end{equation}
and
\begin{equation}\label{equation1 t}
\partial_{t}v(t,x) =\lim_{\varepsilon \rightarrow 0}\int_0^{t-\varepsilon} \int_{\mathbb{R}^n}
\partial_{\tau}\mathcal{W}_\tau(x,y) f(t-\tau,y) \,dy\, d\tau+\int_{\mathbb{R}^n}\partial_t\mathcal{W}_t(x,y) g(y)\,dy+f(t,x).
\end{equation}
 \item[({\bf B})] {\bf Weighted parabolic Sobolev estimates.}   
Let  $g=0$ and $f\in L^p(\mathbb{R}_+^{n+1})$, for $1\le p< \infty$, where $w$
belongs to the parabolic Muckenhoupt class $A_p^*(\R^{n+1})$ mentioned above. 
Then the limits \eqref{equation1 x} and \eqref{equation1 t} exist for a.e.~$(t,x) \in \mathbb{R}_+^{n+1}$.
Moreover, the following weighted Sobolev a priori estimates hold: when $1<p<\infty$,
$$\|\partial_{ij}v\|_{L^p(\R^{n+1}_+,w)}+\| \partial_tv\|_{L^p(\R^{n+1}_+,w)}\le
C_{n,p,w}\| f \|_{L^p(\R^{n+1}_+,w)},$$
and, when $p=1$, for any $\lambda>0$,
$$w\big(\{(t,x)\in\R^{n+1}_+:|\partial_{ij}v|+|\partial_tv|>\lambda\}\big)\leq
\frac{C_{n,w}}{\lambda}\| f \|_{L^1(\R^{n+1}_+,w)}.$$
\end{itemize} 
\end{thm}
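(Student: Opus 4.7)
The overall plan is to settle part (A) by a direct Duhamel-type calculation that mirrors the proof of Theorem~\ref{fundamental1}(A), and to deduce part (B) as a clean corollary of Theorem~\ref{fundamental1}(B) via the zero-extension of $f$ to $\R^{n+1}$.

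For part (A), I would split $v=v_1+v_2$, where $v_2(t,x)=\int_{\R^n}\mathcal{W}_t(x,y)g(y)\,dy$ is the initial-data piece and $v_1(t,x)=\int_0^t\int_{\R^n}\mathcal{W}_\tau(x,y)f(t-\tau,y)\,dy\,d\tau$ is the Duhamel piece. The Mehler kernel is $C^\infty$ with rapid Gaussian decay in $y$ for every fixed $t>0$ (see \cite{Thangavelu}), so for $g$ bounded with compact support all spatial and time derivatives of $v_2$ pass under the integral, producing the $g$-terms in \eqref{equation1 x} and \eqref{equation1 t}. For $v_1$, the substitution $s=t-\tau$ rewrites the integral as $\int_0^t\int_{\R^n}\mathcal{W}_{t-s}(x,y)f(s,y)\,dy\,ds$; Leibniz-differentiating in $t$ produces an upper-limit contribution $\int_{\R^n}\mathcal{W}_0(x,y)f(t,y)\,dy=f(t,x)$, which is exactly the extra $+f(t,x)$ in \eqref{equation1 t}. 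The interior second-order and time derivatives are justified by the $\int_0^{t-\varepsilon}$ truncation together with the $C^2$ regularity of $f$: writing $f(t-\tau,y)=f(t,x)+\bigl(f(t-\tau,y)-f(t,x)\bigr)$ and using the Taylor bound $|f(t-\tau,y)-f(t,x)|\le C(\tau+|x-y|^2)$ together with standard size estimates for $\partial_{y_iy_j}\mathcal{W}_\tau$ and $\partial_\tau\mathcal{W}_\tau$ makes the kernel integrals absolutely convergent, while the ``constant-in-$y$'' piece collapses via $\int_{\R^n}\mathcal{W}_\tau(x,y)\,dy=(e^{-\tau H}\mathbf{1})(x)$ to a telescoping boundary term.

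For part (B), set $\tilde f(s,y)=f(s,y)\mathbf{1}_{\{s>0\}}$ on $\R^{n+1}$, so that $\|\tilde f\|_{L^p(\R^{n+1},w)}=\|f\|_{L^p(\R^{n+1}_+,w)}$. Let $u$ be the global solution of Theorem~\ref{fundamental1} corresponding to $\tilde f$, namely $u(t,x)=\int_0^\infty\int_{\R^n}\mathcal{W}_\tau(x,y)\tilde f(t-\tau,y)\,dy\,d\tau$. Since $\tilde f(t-\tau,y)=0$ for $\tau\ge t$, this integral collapses to the Cauchy Duhamel integral, giving $u(t,x)=v(t,x)$ at every $(t,x)\in\R^{n+1}_+$ (using $g=0$). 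In particular $\partial_{ij}v=\partial_{ij}u$ and $\partial_tv=\partial_tu$ a.e.~on $\R^{n+1}_+$, so the a priori estimates (both strong for $1<p<\infty$ and weak-type for $p=1$) of Theorem~\ref{fundamental1}(B) restrict immediately to the claimed bounds. The a.e.~existence of the Cauchy principal-value limits \eqref{equation1 x}--\eqref{equation1 t} is inherited from the a.e.~existence of the global limits \eqref{equation x}--\eqref{equation t}, the two truncations ($\int_0^{t-\varepsilon}$ against $\iint_{\Omega_\varepsilon}$) differing, up to the local contributions $A_nf\delta_{ij}$, $B_nf$ and the Leibniz-type $+f(t,x)$, only by a quantity that vanishes in the limit because $\tilde f\equiv 0$ on $\{s\le 0\}$.

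The main obstacle is confined to part (A): obtaining uniform-in-parabolic-scaling bounds on the Mehler kernel derivatives $\partial_{y_iy_j}\mathcal{W}_\tau$ and $\partial_\tau\mathcal{W}_\tau$ strong enough to justify both the differentiation under the integral and the precise identification of the boundary Leibniz term, and verifying that the $C^2$ regularity together with the compact support of $f$ in $\R^{n+1}_+$ (so $f(0,\cdot)\equiv 0$) eliminates any extra lower-order terms analogous to $A_nf\delta_{ij}$ and $B_nf$ from \eqref{equation1 x}--\eqref{equation1 t}. Part (B) is, by contrast, a bookkeeping consequence of Theorem~\ref{fundamental1}(B); the only subtle point is matching the one-sided cutoff with the global parabolic-cube principal value, which is automatic on $\R^{n+1}_+$ once $\tilde f$ is zero-extended.
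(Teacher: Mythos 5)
Your overall strategy matches the paper's: split $v=v_1+v_2$ for the classical computation in Part (A), and in Part (B) compare to the global solution for the zero-extension $\tilde f$, which the paper also does (the proof of Theorem~\ref{fundamental3} is by design a sketch deferring to Theorem~\ref{fundamental 3}). However, there is a genuine gap in your Part (B): the a.e.\ existence of the time-slab truncated limits \eqref{equation1 x}--\eqref{equation1 t} does \emph{not} follow merely from the a.e.\ existence of the parabolic-ball truncated limits \eqref{equation x}--\eqref{equation t}, even after observing $u=v$ on $\R^{n+1}_+$. Your claim that the two truncations ``differ, up to local contributions, only by a quantity that vanishes in the limit because $\tilde f\equiv 0$ on $\{s\le 0\}$'' is incorrect: the difference set $\Omega_\varepsilon\setminus\Sigma_{\varepsilon^2}$ is $\{0<\tau\le\varepsilon^2,\ |x-y|>\varepsilon\}$, a region entirely contained in $\{t-\tau>0\}$ when $\varepsilon^2<t$, so the zero-extension plays no role there. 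The kernel is singular on this region, so the difference does not trivially tend to zero; in fact for $C^2$ data it tends to $A_nf(t,x)\delta_{ij}$.

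What is actually needed is the comparison argument the paper carries out for the heat equation in the proof of Theorem~\ref{fundamental 3}(B): (i) bound $\sup_{\varepsilon}\big|\iint_{\Omega_\varepsilon}-\iint_{\Sigma_{\varepsilon^2}}\big|$ by a maximal operator $f\mapsto\sup_\varepsilon \varepsilon^{-(n+2)}\Psi(\cdot/\varepsilon)\ast|f|$ with $\Psi\in L^1(\R^{n+1})$, which is bounded on $L^p(\R^{n+1},w)$ for $w\in A_p^*$ and of weak type $(1,1)$; (ii) observe that for $C^2$ compactly supported $f$ the difference converges pointwise (to $\delta_{ij}A_nf$, resp.\ $B_nf$ plus the Leibniz $f(t,x)$); and (iii) invoke the Banach principle of a.e.\ convergence to transfer a.e.\ convergence of the difference from the dense class to all $f\in L^p(\R^{n+1}_+,w)$. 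Only then does the a.e.\ existence of the $\Sigma_\varepsilon$-truncated limits follow from Theorem~\ref{fundamental1}(B). Once this is in place, the strong and weak-type estimates do transfer essentially as you describe, since $\partial_{ij}v$ and $\partial_{ij}u$ then differ only by a bounded multiple of $f$. You should add steps (i)--(iii) to close the argument; the rest of the proposal is sound.
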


We notice that Duhamel's principle shows that \eqref{duhamel} is the correct
candidate for solution to \eqref{CauchyP}.
Observe that we get the first integral in \eqref{duhamel} just by restricting the formula of the
solution in the whole space given previously in Theorem \ref{fundamental1}.
In our version of the result above for the heat equation when $g=0$
(Subsection \ref{subsection:2}) we improve Jones' results, compare with \cite{Jones}.

Our key idea is to develop the \textit{language of semigroups} for parabolic equations.
This method allows us to avoid the use of the Fourier transform, which is necessary if we want
to deal with non translation invariant equations and oscillatory integrals.
We should mention that this point of view has been successfully established in recent years to apply to elliptic PDEs, see
\cite{Caffarelli-Stinga, Stempak-Torrea, Stinga-Torrea}, while
a few attempts have also been done for hyperbolic equations, see
\cite{Gale-Miana-Stinga, Kemppainen-Sjogren-Torrea}, as well as for fractional time equations \cite{Bernardis}.
An obvious difference with respect to the elliptic case that produces several technical difficulties 
is the geometry of the underlying space, which is driven by the parabolic distance.
This application of the semigroup language to the heat equation
will give, by following a natural and unified path, all the results in \cite{Fabes, Fabes-Sadosky, Jones}
that we mentioned before. As already shown, new results are obtained
like the a.e.~convergence of the limit \eqref{equation1 x} for functions $f$ in $L^p(\R^{n+1}_+,w)$,
$1\leq p< \infty$, $w\in A_p^*(\R^{n+1})$.

These ideas will be presented with several details in Section \ref{pre}. 
Although part of our results may be known, the reader will see that it is quite convenient to highlight
the structure of the computations in the classical scenario, in particular, by keeping track of
explicit constants. Indeed, the structure persists for the harmonic oscillator evolution equation
and will make the proofs more readable, though quite delicate computations will be needed.
This is due in part to the fact that  the kernel of the heat semigroup is described by  Melher's formula, see \eqref{Hheat},
but also, no less important,  to the fact that we took an accurate account of the constants.
Moreover, those interested just in the heat
equation and the main ideas can just skip Section \ref{pf1}.

We wish to point out another technical point which happens to be crucial.
Along this paper we shall use the vector-valued Calder\'on--Zygmund
theory in spaces of homogeneous type.
We remind the reader that this machinery requires two ingredients:
a kernel satisfying the so-called standard estimates and the
boundedness of the given operator in an $L^{p_0}$ space, for some $1\leq p_0\leq\infty$.
In the cases of Theorems \ref{mixriesz} and \ref{fundamental1},
which correspond to parabolic Riesz transforms, the natural exponent is $p_0=2$.
Theorem \ref{L2} shows the $L^2$ boundedness of the parabolic Riesz transforms
associated to the harmonic oscillator evolution equation,
which is a result of independent interest.
This is consistent with the usual theory of Riesz transforms for the Laplacian, where the Fourier
transform readily shows the $L^2$ continuity.
But in the case of the Poisson operators of Theorem \ref{Poisson}, the natural
initial space is $p_0=\infty$, see the proof of Theorem \ref{Poisson1}.

The path followed to reach the results could be applied to other operators.
It will be clear that our work can be regarded as a unified path to study equations like 
$\partial_tu=Lu+f$, for $f\in L^p(\mathbb{R}\times\Omega)$,
where $L$ is a positive linear differential operator acting on $x\in\Omega$ subject to appropriate boundary conditions. 
The cases in which $L$ is either the Laguerre operator, the Bessel
operator (radial Laplacian), or the Laplace--Beltrami operator
on a Riemannian manifold, as well as other singular integrals like square and area functions,
will be considered in future works.

The organization of the paper is the following.
In Section \ref{pre} we present the crucial formula \eqref{remark 1}
which allows us to define the inverses of our parabolic operators
with the semigroup language approach. From this point on we 
obtain the weighted parabolic Sobolev estimates for the heat equation,
both for the equation posed in the whole space and for the Cauchy problem,
see Theorems \ref{fundamental 1} and \ref{fundamental 3}.
Section \ref{pf1} contains the proof of Theorems \ref{fundamental1} and \ref{fundamental3}.
Finally, Section \ref{mix} is devoted to show Theorems \ref{mixriesz} and \ref{Poisson}.

%%%%%%%%%%%%%%%%%%%%%%%%%%%%%%%%%%%%%%%%%%%%%%%%%%%%%%
\section{Weighted mixed-norm Sobolev estimates for the heat equation}\label{pre}
%%%%%%%%%%%%%%%%%%%%%%%%%%%%%%%%%%%%%%%%%%%%%%%%%%%%%%

In this section we prove our results for the case of the heat equation.
We will make use of the theory of vector-valued parabolic Calder\'on--Zygmund
singular integrals, which we first describe.

%%%%%%%%%%%%%%%%%%%%%%%%%%%%%%%%%%%%%%%%%%%%%%%%%%%%%%
\subsection{The parabolic Calder\'on--Zygmund theory}
%%%%%%%%%%%%%%%%%%%%%%%%%%%%%%%%%%%%%%%%%%%%%%%%%%%%%%

Let $\X$ be a set. A function $\rho:\X\times\X\to[0,\infty)$
is called a quasidistance in $\X$ if for any $\x,\y,\z\in\X$ we have: (1) $\rho(\x,\y)=\rho(\y,\x)$, 
(2) $\rho(\x,\y)=0$ if and only if $\x=\y$, and (3) $\rho(\x,\z)\le\kappa(\rho(\x,\y)+\rho(\y,\z))$ for some constant $\kappa\geq1$.
We assume that $\X$ has the topology induced by 
the open balls $B(\x,r)$ with center at $\x\in\X$ and radius $r>0$ defined as $B(\x,r):=\{\y\in\X:\rho(\x,\y)<r\}$.
Let $\mu$ be a positive Borel measure on $(\X,\rho)$
such that, for some universal constant $C_d>0$, we have $\mu(B(\x,2r)) \le C_d\mu(B(\x,r))$ (the so-called doubling property),
for every $\x\in\X$ and $r >0$.
The space $(\X,\rho,\mu)$ is called a \textit{space of homogeneous type}.

Let $w:\X\to\R$ be a weight, namely, a measurable function such that $w(\x)>0$
for $\mu$-a.e. $\x\in\X$.
Given a Banach space $E$, we denote by $L^p_E(\X,w)=L^p(\X,w;E)$, $1\leq p\leq\infty$, the space
of strongly measurable $E$-valued
functions $f$ defined on $\X$ such that $\|f\|_E$ belongs to $L^p(\X,w(\x)d\mu)$.
When $w=1$ we just write $L^p_E(\X)=L^p(\X,E)$.

\begin{defn}[Vector-valued Calder\'on--Zygmund operator on $(\X,\rho,\mu)$]\label{CZ}
Let $E,F$ be Banach spaces. We say that a linear  operator $T$ on 
a space of homogeneous type $(\X,\rho,\mu)$ is a Calder\'on--Zygmund operator if it satisfies the following conditions.
\begin{itemize}
\item [(I)] There exists $1\le p_0 \le\infty$ such that $T$ is bounded from
$L_E^{p_0}(\X)$ into $L_F^{p_0}(\X)$.
\item[(II)] For  bounded $E$-valued functions $f$ with compact support, $Tf$ can be represented as 
\begin{equation}\label{CZ2}
Tf(\x) = \int_X K(\x,\y)f(\y)\,d\mu, \quad\hbox{for}~\x\notin{\rm supp}(f),
\end{equation}
where  $K(\x,\y) \in \mathcal{L} (E,F)$, the space of bounded linear operators from $E$ to $F$ and, moreover, 
\begin{itemize}
\item[(II.1)]  $\displaystyle \|K(\x,\y) \|_{\mathcal{L} (E,F)} \le  \frac{C}{\mu(B(\x,\rho(\x,\y))}$, for every $\x\neq\y$;
\item[(II.2)] $\displaystyle   \|K(\x,\y) - K(\x,\y_0) \|_{\mathcal{L}(E,F)}+\|K(\y,\x) - K(\y_0,\x)\|_{\mathcal{L}(E,F)}
\le C\frac{\rho(\y,\y_0)}{\rho(\x,\y_0)\mu(B(\y_0,\rho(\x,\y_0))},$
whenever $\rho(\x,\y_0)>2\rho(\y,\y_0)$;
\end{itemize}
for some constant $C>0$.
\end{itemize}
\end{defn}
In this paper we shall be mainly working with the space of homogeneous type $(\X,\rho,\mu)=(\mathbb{R}^{n+1},d,dtdx)$,
where $d$ is the parabolic distance defined by
\begin{equation}\label{eq:parabolic distance}
d\big((t,x),(s,y)\big)=\max(|t-s|^{1/2},|x-y|),\quad\hbox{for}~(t,x),(s,y)\in\R^{n+1},
\end{equation}
and $dtdx$ is the Lebesgue measure
on $\R^{n+1}$. Observe that in this case, $B((t,x),r)=\{(s,y)\in\R^{n+1}:\max(|t-s|^{1/2},|x-y|)<r\}$ and
$|B((t,x),r)|=|B((0,0),r)|\sim r^{n+2}$,
so $dtdx$ is a doubling measure on parabolic balls as required.
On the other hand, it is clear that $\R^n$ with the usual Euclidean distance and the Lebesgue measure is a space 
of homogeneous type.

A weight $w$ on $(\X,\rho,\mu)$ is said to be a Muckenhoupt $A_p(\X)$ weight, $1<p<\infty$,
if there exists a constant $C_p>0$ such that
$$\bigg(\frac{1}{\mu(B)}\int_Bw(\x)\,d\mu\bigg)\bigg(\frac{1}{\mu(B)}\int_Bw(\x)^{1/(1-p)}\,d\mu\bigg)^{p-1}\leq C_p,$$
for every ball $B\subset\X$. The weight belongs to $A_1(\X)$ if there is a constant $C_1>0$ such that
$$\frac{1}{\mu(B)}\int_{B}w(\y)\,d\mu(\y)\leq C_1w(\x),$$
for every ball $B\subset\X$ that contains $\x$, for a.e. $\x\in\X$.

The Calder\'on--Zygmund Theorem says that if $T$ is a Calder\'on--Zygmund
operator on a space of homogeneous type $(\X,\rho,\mu)$ as above
then $T$ is bounded from $L^p_E(\X,w)$ into $L^p_F(\X,w)$, for any $1<p<\infty$ and $w\in A_p(\X)$,
and it is also bounded from $L^1_E(\X,w)$ into
weak-$L^1_F(\X,w)$, for any $w\in A_1(\X)$. Moreover, the maximal operator of the truncations
\begin{equation}\label{truncation}
T^\ast f(\x) =\sup_{\varepsilon>0}\bigg\| \int_{\rho(\x,\y)>\varepsilon} K(\x,\y)f(\y)\,d\mu\bigg\|_F,
\end{equation}
is bounded from $L^p_E(\X,w)$ into $L^p(\X,w)$, $w\in A_p(\X)$, for $1<p<\infty$, and from $L^1_E(\X,w)$
into weak-$L^1(\X,w)$, $w\in A_1(\X)$.

For full details about the theory presented above see \cite{MST, RRT, Francisco, RT}.

 Notice next that for the case of the parabolic distance \eqref{eq:parabolic distance} the right hand sides in conditions
(II.1) and (II.2) above read, for $\x=(t,x)$, $\y=(s,y)$ and $\y_0=(s_0,y_0)$,
\begin{equation}\label{first}
\frac{C}{\mu(B(\x,\rho(\x,\y))}=\frac{C}{\max(|t-s|^{1/2},|x-y|)^{n+2}}\sim\frac{C}{(|t-s|^{1/2}+|x-y|)^{n+2}},
\end{equation}
and
\begin{equation}\label{second}
\frac{\rho(\y,\y_0)}{\rho(\x,\y_0)\mu(B(\y_0,\rho(\x,\y_0))}=
\frac{\max(|s-s_0|^{1/2},|y-y_0|)}{\max(|t-s_0|^{1/2},|x-y_0|)^{n+3}}\sim
\frac{|s-s_0|^{1/2}+|y-y_0|}{(|t-s_0|^{1/2}+|x-y_0|)^{n+3}},
\end{equation}
respectively. Finally, the set of points $\y\in\X$ such that $\rho(\x,\y)>\varepsilon$ appearing in \eqref{truncation} is
\begin{equation}\label{conjuntitos}
\Omega_\varepsilon(t,x):=\{(s,y)\in\R^{n+1}:\max(|t-s|^{1/2},|x-y|)>\varepsilon\}.
\end{equation}

%%%%%%%%%%%%%%%%%%%%%%%%%%%%%%%%%%%%%%%%%%%%%%%%%%%%%%
\subsection{The semigroup language and the heat equation}
%%%%%%%%%%%%%%%%%%%%%%%%%%%%%%%%%%%%%%%%%%%%%%%%%%%%%%

As the operators $\partial_t$ and $\Delta$ commute, the semigroup $\{e^{-\tau(\partial_t-\Delta)}\}_{\tau\geq0}$
is given by the composition
$e^{-\tau(\partial_t-\Delta)}=e^{-\tau \partial_t}\circ e^{\tau\Delta}$.
In particular, for smooth functions $\varphi(t,x)$ with rapid decay at infinity we have
$$e^{-\tau(\partial_t-\Delta)}\varphi(t,x)=e^{\tau\Delta}\varphi(t-\tau,x)=\int_{\R^n}W(\tau,y)\varphi(t-\tau,x-y)\,dy,$$
where $W(\tau,y)$ denotes the usual Gauss--Weierstrass kernel
\begin{equation}\label{GW}
W(\tau,y):= \frac1{(4\pi\tau)^{n/2}}e^{-|y|^2/(4\tau)}.
\end{equation}
Recall that $\partial_\tau W-\Delta_yW=0$, for $\tau>0$ and $y\in\R^n$.
Notice that 
\begin{equation}\label{Fheat}
\widehat{e^{-\tau(\partial_t-\Delta)}\varphi}(\rho,\xi)=e^{-\tau(i\rho+|\xi|^2)}\widehat{\varphi}(\rho,\xi),
\end{equation}
for $\rho\in\R$, $\xi\in\R^n$.
On the other hand it is easy to check that 
\begin{equation}\label{remark 1}
(i\rho +\lambda)^{-1}=\int_0^\infty e^{-\tau(i\rho+ \lambda)}\,d\tau\quad\hbox{for}~\lambda >0,
\end{equation}
where the integral is absolutely convergent, see for example \cite{Stinga-Torrea-s}. 
By using formulas \eqref{remark 1} and \eqref{Fheat} we define, for any $t\in\R$, $x\in\R^n$,
$$(\partial_t-\Delta)^{-1}f(t,x)= \int_0^\infty e^{-\tau(\partial_t-\Delta)}f(t,x)\,d\tau=
\int_0^\infty\int_{\R^n}\frac{e^{-|y|^2/(4\tau)}}{(4\pi\tau)^{n/2}} f(t-\tau,x-y)\,dy\,d\tau.$$

\begin{rem}
In fact, these two ideas can be used to find formulas in order to solve parabolic equations of the form
$\partial_tu+Lu=f$, for $f\in L^2(\mathbb{R}\times\Omega)$,
where $L$ is a nonnegative densely defined self-adjoint
linear operator in some $L^2(\Omega,d\eta)$. This observation will be crucial in Section \ref{pf1}.
\end{rem}

%%%%%%%%%%%%%%%%%%%%%%%%%%%%%%%%%%%%%%%%%%%%%%%%%%%%%%
\subsection{Heat equation: classical solvability and weighted Sobolev estimates in the whole space}\label{subsection:1}
%%%%%%%%%%%%%%%%%%%%%%%%%%%%%%%%%%%%%%%%%%%%%%%%%%%%%%

In this subsection we solve the heat equation in the whole space and we prove the 
weighted Sobolev estimates. This is the heat equation counterpart of Theorem \ref{fundamental1}.

\begin{thm}\label{fundamental 1}
Let $W(\tau,y)$, $\tau>0$, $y\in\R^n$, be the Gauss--Weierstrass kernel \eqref{GW}.
\begin{itemize}
\item[({\bf A})]{\bf Classical solvability.}
Let $f=f(t,x)$ be a bounded function with compact support on $\R^{n+1}$.
Then for every $(t,x) \in \mathbb{R}^{n+1}$ the following integral is well defined 
$$u(t,x)=\int_0^\infty \int_{\mathbb{R}^n}W(\tau,y)f(t-\tau,x-y)\,dy\,d\tau.$$
If $f$ is also a $C^2$ function then the function $u$ as defined above is a classical solution to the heat equation
$\partial_tu=\Delta u+f$, in $\R^{n+1}$.
Moreover the following pointwise limit formulas hold:
\begin{equation}\label{equation2 x}
\partial_{ij}u(t,x)=\lim_{\varepsilon\rightarrow0}\iint_{\Omega_\varepsilon}
\partial_{y_iy_j}W(\tau,y)f(t-\tau,x-y)\,dy\,d\tau-A_nf(t,x)\delta_{ij},
\end{equation}
and
\begin{equation}\label{equation2 t}
\partial_t u(t,x)=\lim_{\varepsilon\rightarrow0}\iint_{\Omega_\varepsilon}
\partial_\tau W(\tau,y)f(t-\tau,x-y)\,dy\,d\tau+B_n f(t,x),
\end{equation}
where
\begin{equation}\label{eq:AyB}
A_n = \frac1{n \Gamma(\frac{n}{2})}\int_{1/4}^\infty w^\frac{n}{2} e^{-w} \,\frac{dw}{w},
\qquad B_n = \frac1{ \Gamma(\frac{n}{2})}\int_0^{1/4}w^\frac{n}{2} e^{-w} \,\frac{dw}{w},
\end{equation}
and $\Omega_\varepsilon= \{(\tau,y):\max(|\tau|^{1/2},|y|)>\varepsilon\}$.
\item[({\bf B})]{\bf Weighted parabolic Sobolev estimates.}
In the case when $f\in L^p(\mathbb{R}^{n+1}, w)$, $1\le p<\infty$, $w\in A_p^*(\R^{n+1})$, the limits above
exist for a.e.~$(t,x)\in\R^{n+1}$ and the following a priori estimates hold: for $1<p<\infty$,
$$\|\partial_{ij}u\|_{L^p(\R^{n+1},w)}+\|\partial_t u\|_{L^p(\R^{n+1},w)}\leq C_{n,p,w}\|f\|_{L^p(\R^{n+1},w)},$$
and, in the case $p=1$, for any $\lambda>0$,
$$w\big(\{(t,x)\in\R^{n+1}:|\partial_{ij}u|+|\partial_tu|>\lambda\}\big)
\le\frac{C_{n,w}}{\lambda}\|f\|_{L^1(\R^{n+1},w)}.$$
\end{itemize}
\end{thm}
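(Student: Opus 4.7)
The plan is to handle (A) by direct computation with the Gauss--Weierstrass kernel \eqref{GW}, and (B) by casting $f\mapsto\partial_{ij}u$ and $f\mapsto\partial_tu$ as vector-valued Calder\'on--Zygmund operators on the space of homogeneous type $(\R^{n+1},d,dt\,dx)$ and invoking the theory recalled in Subsection 2.1.

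For part (A), the well-definedness of $u(t,x)$ for $f$ bounded with compact support is clear since $W(\tau,\cdot)$ is a probability density and the $\tau$-integral is effectively restricted to a bounded interval. When $f\in C^2$, dominated convergence justifies differentiating under the integral after moving derivatives onto $f$, and the identity $\partial_\tau W=\Delta_yW$ together with an integration by parts in $\tau$ shows that $u$ solves the heat equation in a classical sense. To obtain the principal-value formulas I would start from
$$\partial_{ij}u(t,x)=\int_0^\infty\!\!\int_{\R^n}W(\tau,y)\,\partial_{y_iy_j}[f(t-\tau,x-y)]\,dy\,d\tau,$$
and the analogue for $\partial_tu$, and split the $(\tau,y)$ integration at the parabolic ball $\{\max(\tau^{1/2},|y|)\le\varepsilon\}$. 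On $\Omega_\varepsilon$ the kernels $\partial_{y_iy_j}W$ and $\partial_\tau W$ are bounded, so integration by parts (twice in $y$, or once in $\tau$) moves the derivatives back onto $W$ and produces boundary contributions on the parabolic sphere $\{\max(\tau^{1/2},|y|)=\varepsilon\}$. Polar coordinates in $y$ together with the substitution $w=|y|^2/(4\tau)$ convert these boundary integrals to the radial integrals defining $A_n$ and $B_n$ in \eqref{eq:AyB}; the interior piece tends to zero thanks to the bound $|f(t-\tau,x-y)-f(t,x)|\le C(\tau^{1/2}+|y|)$ valid inside the parabolic ball.

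For part (B), I would verify Definition \ref{CZ} for the kernels $K_{ij}(t,x;s,y)=\partial_{y_iy_j}W(t-s,x-y)\mathbf{1}_{\{t>s\}}$ and $K_0(t,x;s,y)=\partial_\tau W(t-s,x-y)\mathbf{1}_{\{t>s\}}$, with $E=F=\R$ and $p_0=2$. The pointwise bound
$$|\partial_{y_iy_j}W(\tau,y)|+|\partial_\tau W(\tau,y)|\le C\tau^{-n/2-1}e^{-c|y|^2/\tau}\le C(\tau^{1/2}+|y|)^{-n-2}$$
yields the size condition \eqref{first}, and the regularity condition \eqref{second} follows from the mean value theorem applied to the analogous Gaussian estimate for third-order derivatives of $W$. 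The hypothesis (I) of $L^2(\R^{n+1})$ boundedness is immediate from Plancherel, since the Fourier symbols $-\xi_i\xi_j/(i\rho+|\xi|^2)$ and $i\rho/(i\rho+|\xi|^2)$ are uniformly bounded on $\R\times\R^n$. The vector-valued Calder\'on--Zygmund Theorem quoted after Definition \ref{CZ} then provides the strong $L^p(\R^{n+1},w)$ estimate for $1<p<\infty$ and $w\in A_p^*(\R^{n+1})$, the weak $(1,1)$ estimate for $w\in A_1^*(\R^{n+1})$, and the corresponding bounds for the maximal truncation operator $T^\ast$ in \eqref{truncation}. The a.e.~existence of the limits \eqref{equation2 x} and \eqref{equation2 t} then follows by a standard density argument: the $T^\ast$ bound controls the oscillation of the truncated integrals, and part (A) supplies pointwise convergence on the dense class of $C^2$ bounded compactly supported functions.

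The main obstacle I expect lies in part (A): because the parabolic ball is anisotropic, the boundary of $\Omega_\varepsilon$ consists of two geometrically distinct pieces (a cylindrical side $\{|y|=\varepsilon,\,0<\tau<\varepsilon^2\}$ and a disk cap $\{\tau=\varepsilon^2,\,|y|\le\varepsilon\}$), and extracting the precise constants $A_n$ and $B_n$, rather than merely showing existence of the principal values, requires a careful coupling of the two integrations by parts with the change of variables $w=|y|^2/(4\tau)$. Once this computation is carried out, part (B) is a routine application of the parabolic Calder\'on--Zygmund machinery already set up in Subsection 2.1.
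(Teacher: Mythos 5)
Your proposal is correct and follows essentially the same route as the paper: part (A) by moving the derivatives onto $f$, truncating to $\Omega_\varepsilon$, integrating by parts back onto $W$, and extracting $A_n$, $B_n$ from the boundary terms on the cylindrical side $\{|y|=\varepsilon,\,\tau<\varepsilon^2\}$ and cap $\{\tau=\varepsilon^2,\,|y|<\varepsilon\}$ via the substitution $w=|y|^2/(4\tau)$; part (B) by identifying the parabolic Riesz transforms, checking their $L^2$ boundedness through the Fourier symbols $\xi_i\xi_j/(i\rho+|\xi|^2)$, $i\rho/(i\rho+|\xi|^2)$, verifying the parabolic Calder\'on--Zygmund size and smoothness estimates for the kernels (extended by zero to $\tau\le0$), and invoking the maximal-truncation bound together with a density argument for the a.e.~limits. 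The paper's only visible difference is cosmetic: it splits $\partial\Omega_\varepsilon$ into three pieces, with the trivial third piece at $\tau=0$ contributing nothing, which you correctly absorbed.
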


\begin{proof}[Proof of Theorem \ref{fundamental 1} Part (A)]
For any $(t,x)\in\R^{n+1}$ there exists a constant $c_t$ depending on the support of $f$ such that 
$$\bigg|\int_0^\infty \int_{\R^n}W(\tau,x-y)f(t-\tau,y)\,dy\,d\tau\bigg|
\le C\|f\|_{L^\infty(\R^{n+1})}\int_0^{c_t}\int_{\R^n}W(\tau,y)\,dy\,d\tau=C_{n,t},$$
where we used that the integral in $y$ of the kernel $W(\tau,y)$ is identically $1$ for any $\tau$.
The argument above also shows that we can interchange the integral
and the second derivatives $\partial_{ij}$ when $f$ is a $C^2$ function with compact support.
 
Next we show that $u(t,x)$ satisfies the equation.  We first compute $\partial_{ii}u$. Observe that 
for any $i=1,\ldots,n$,
\begin{align*}
\partial_{ii}u(t,x) &= \int_0^\infty \int_{\R^n}W(\tau,y)\partial_{x_ix_i}f(t-\tau,x-y)\,dy\ d\tau \\
&= \lim_{\varepsilon \rightarrow 0 }\iint_{\Omega_\varepsilon}W(\tau,y)\partial_{y_iy_i}f(t-\tau,x-y)\,dy\,d\tau.
\end{align*}
where $ \Omega_\varepsilon=\{(\tau,y):\max(\tau^{1/2},|y|)>\varepsilon\}$. Integration by parts gives
\begin{align*}
\iint_{\Omega_\varepsilon}W(\tau,y)\partial_{y_iy_i}f(t-\tau,x-y)\,dy\,d\tau&=
  - \iint_{\Omega_\varepsilon}\partial_{y_i}W(\tau,y)\partial_{y_i}f(t-\tau,x-y)\,dy\,d\tau \\
  &\quad+\iint_{\partial{\Omega_\varepsilon}}W(\tau,y)\partial_{y_i}f(t-\tau,x-y)\nu_i\,d\sigma(y,\tau),
\end{align*}
where $\nu_i$ is the $i$th-component of the exterior unit normal vector to $\partial{\Omega_\varepsilon}$.
Let us write
\begin{equation}\label{eq:omega1}
\partial{\Omega_\varepsilon} = \partial{\Omega_\varepsilon^1}\cup
\partial{\Omega_\varepsilon^2}\cup\partial{\Omega_\varepsilon^3},
\end{equation}
where (recall that $\tau>0$)
\begin{equation}\label{eq:omega2}
\begin{aligned}
\partial{\Omega_\varepsilon^1}&=\{(\tau,y): \, |y| <  \varepsilon,\,\tau^\frac{1}{2} = \varepsilon\},\\
\partial{\Omega_\varepsilon^2}&= \{(\tau,y): \, |y| = \varepsilon, 0< \tau^\frac{1}{2} <\varepsilon \},\\
\partial{\Omega_\varepsilon^3}&= \{(\tau,y): \,|y| < \varepsilon,\, \tau^\frac{1}{2}= 0 \}.
\end{aligned}
\end{equation}
Observe that the exterior unit normal vector on
$\partial{\Omega_\varepsilon^1}$ is $(-1,0,\ldots,0)\in\R^{n+1}$. Then
$$\iint_{\partial{\Omega_\varepsilon^1}}W(\tau,y)\partial_{y_i}f(t-\tau,x-y)\nu_i\,d\sigma(y,\tau)= 0,$$
for any $i=1,\dots,n$, and the same is true for the boundary integral over $\Omega_\varepsilon^3$. 
On the other hand, the unit normal  of $\partial{\Omega_\varepsilon^2}$ is $\frac{1}{\varepsilon}(0,-y)$. Hence
\begin{align*}
\iint_{\partial{\Omega_\varepsilon^2}}W(\tau,y)|\partial_{y_i}f(t-\tau,x-y)|\,d\sigma(y,\tau)
&\leq C\int_0^{\varepsilon^2}\int_{|y|=\varepsilon} \frac{e^{-|y|^2/(4\tau)}}{\tau^{n/2}}\,d\sigma(y)\,d\tau\\
&= C\int_0^{\varepsilon^2}\frac{e^{-\varepsilon^2/(4\tau)}}{\tau^{n/2}}\varepsilon^{n-1}\,d\tau  
=C\varepsilon\rightarrow0,\quad\hbox{as}~\varepsilon\rightarrow0.
\end{align*}
Again, integration by parts together with a parallel discussion of the boundary integrals gives
\begin{align*}
-\iint_{\Omega_\varepsilon}\partial_{y_i}W(\tau,y)\partial_{y_i}f(t-\tau,x-y)\,dy\,d\tau
&=\iint_{\Omega_\varepsilon}\partial_{y_iy_i}W(\tau,y)f(t-\tau,x-y)\,dy\,d\tau\\
&\quad-\iint_{\partial{\Omega^2_\varepsilon}}\partial_{y_i}W(\tau,y)f(t-\tau,x-y)\nu_i\,d\sigma(y,\tau) \\
&=:I_1-I_2.
\end{align*}
The integral $I_1$ corresponds to the first term in \eqref{equation2 x} when $i=j$. Let us rewrite $I_2$ as
\begin{align*}
I_2 &=- \iint_{\partial{\Omega_\varepsilon^2}}\partial_{y_i}W(\tau,y)\big(f(t-\tau,x-y)-f(t,x)\big)\frac{y_i}{|y|}\,d\sigma(y,\tau)
-f(t,x)\iint_{\partial{\Omega_\varepsilon^2}}\partial_{y_i}W(\tau,y)\frac{y_i}{|y|}\,d\sigma(y,\tau) \\
&=: I_{21}+I_{22}.
\end{align*}
Since
$$\partial_{y_i}W(\tau,y)=-\frac{e^{-|y|^2/(4\tau)}}{(4 \pi \tau)^{n/2}}\cdot\frac{y_i}{2\tau},$$
by the Mean Value Theorem we get
\begin{align*}
|I_{21}| &\leq C\int_0^{\varepsilon^2} \int_{|y|=\varepsilon} \frac{|y|}{\tau^{n/2+1}}e^{-|y|^2/(4\tau)}(\tau+|y|)\,d\sigma(y)\,d\tau\\
&\leq C\int_0^{\varepsilon^2}\frac{ \varepsilon^{n+1}e^{-\varepsilon^2/(4\tau)}}{\tau^{n/2+1}}\,d\tau
=C\varepsilon\rightarrow0,\quad\hbox{as}~\varepsilon\rightarrow0,
\end{align*}
where we have assumed $\varepsilon<1$. For $I_{22}$ we notice that its value is independent of $i$, so by 
taking the sum over $i=1,\ldots,n$ we obtain
\begin{align*}
I_{22} &= \frac{1}{n} f(t,x)\int_0^{\varepsilon^2} \int_{|y|=\varepsilon}
\frac{ e^{-|y|^2/(4\tau)}}{(4 \pi \tau)^{n/2}}\cdot\frac{|y|}{2\tau}\,d\sigma(y)\,d\tau
=\frac{|S^{n-1}|f(t,x)}{2n(4 \pi)^{n/2}}\int_1^\infty r^\frac{n}{2}e^{-\frac{r}{4}}\,\frac{dr}{r}.
\end{align*}
Pasting together our last computations we arrive to 
\begin{equation}\label{identity x}
\partial_{ii}u(x,t)= \lim_{\varepsilon\rightarrow 0}\int\int_{\Omega_\varepsilon}
\partial_{y_iy_i}W(\tau,y)f(t-\tau,x-y)\,dy\,d\tau-A_nf(t,x),
\end{equation}
with $A_n$ as in \eqref{eq:AyB}.

Observe that in the parallel
computation for $\partial_{ij}u$ with $i\neq j$ the integral in the term $I_{22}$ will be equal to 
$$\iint_{\partial \Omega_\varepsilon^2}\partial_{y_j}W(\tau,y)\frac{y_i}{|y|}\,d\sigma(\tau,y)
= \int_0^{\varepsilon^2}\int_{|y|=\varepsilon}\frac{e^{-|y|^2/(4\tau)}}{(4\pi \tau)^{n/2}}\frac{y_iy_j}{2\tau|y|}\,d\sigma(y)\,d\tau=0.$$
Then \eqref{equation2 x} is true.

Next we compute $\partial_tu(t,x)$. In a similar fashion as before,
\begin{multline*}
\partial_tu(t,x)=-\lim_{\varepsilon\rightarrow 0}\iint_{\Omega_\varepsilon}W(\tau,y)\partial_\tau f(t-\tau,x-y)\,dy\,d\tau\\
=\lim_{\varepsilon\rightarrow 0}\iint_{\Omega_\varepsilon}\partial_\tau W(\tau,y)f(t-\tau,x-y)\,dy\,d\tau
-\lim_{\varepsilon\rightarrow 0}\iint_{\partial{\Omega_\varepsilon}}W(\tau,y)f(t-\tau,x-y)\nu_{n+1}\,d\sigma(y,\tau). 
\end{multline*}
Again, we decompose $\partial\Omega_\varepsilon$ as in \eqref{eq:omega1}--\eqref{eq:omega2}.
Clearly, $\nu_{n+1}=0$ on $\partial\Omega^2_\varepsilon$. On the other hand,
$$\iint_{\partial{\Omega^3_\varepsilon}}W(\tau,y)f(t-\tau,x-y)\,d\sigma(y,\tau)
=\int_{|y|=\varepsilon}W(0,y)f(t,x-y)\,d\sigma(y)=0.$$
Parallel to the spatial derivatives case we write
\begin{align*}
\iint_{\partial{\Omega^1_\varepsilon}}W(\tau,y)f(t-\tau,x-y)\nu_\tau\,d\sigma(y,\tau) 
&= \iint_{\partial{\Omega^1_\varepsilon}}W(\tau,y)\big(f(t-\tau,x-y)-f(t,x)\big)\nu_\tau\,d\sigma(y,\tau) \\
&\quad+f(t,x)\iint_{\partial{\Omega^1_\varepsilon}}W(\tau,y)\nu_\tau\,d\sigma(y,\tau)=:J_1+J_2
\end{align*}
We apply the Mean Value Theorem in $J_1$ to get
\begin{align*}
|J_1| &\leq C \int_{\tau=\varepsilon^2}\int_{|y|< \varepsilon} \frac{e^{-|y|^2/(4\tau)}}{(4 \pi \tau)^{n/2}}(\tau+|y|)\,d\sigma(y,\tau)\\
&\leq C\varepsilon^{(1-n)}\int_0^\varepsilon r^{n-1}e^{-r^2/(4\varepsilon^2)}\,dr
= C\varepsilon\int_0^1w^{n-1}e^{-w^2/4}\,dw\rightarrow 0,\quad\hbox{as}~\varepsilon\rightarrow0,
\end{align*}
where we have assumed that $\varepsilon<1$. Finally, for $J_2$, we have
$$J_2 = -\frac{|S^{n-1}|f(t,x)}{(4 \pi\varepsilon^2)^{n/2}}\int_0^\varepsilon r^{n-1}e^{-r^2/(4\varepsilon^2)}\,dr
 = -\frac{|S^{n-1}|f(t,x)}{(4\pi)^{n/2}}\int_0^1 w^n e^{-\frac{w^2}{4}}\,\frac{dw}{w}.$$
In other words, we have \eqref{equation2 t}
with $B_n$ as in \eqref{eq:AyB}. From \eqref{identity x} and \eqref{equation2 t} we get $\partial_tu=\Delta u+f$.
\end{proof}

\begin{proof}[Proof of Theorem \ref{fundamental 1} Part (B)]
The identities \eqref{equation2 x} and \eqref{equation2 t} establish that the
parabolic Riesz transforms
$$R_{ij}^\Delta:=\partial_{ij}(\partial_t-\Delta)^{-1}\quad\hbox{and}\quad R_t^\Delta:=\partial_t(\partial_t-\Delta)^{-1},$$
for $i,j=1,\ldots,n$, can be seen as operators satisfying \eqref{CZ2} in Definition \ref{CZ}.
On the other hand, for functions $f\in L^2(\mathbb{R}^{n+1})$ we have 
$$\widehat{R_{ij}^\Delta f}(\rho,\xi)=\frac{\xi_i\xi_j}{i\rho+|\xi|^2}\widehat{f}(\rho,\xi)\quad
\hbox{and}\quad\widehat{R_t^\Delta f}(\rho,\xi)= \frac{i\rho}{i\rho+|\xi|^2} \widehat{f}(\rho,\xi).$$
The Fourier multipliers above are bounded functions, hence the parabolic Riesz transforms
are bounded operators in $L^2(\R^{n+1})$. In order to be able to conclude the weighted $L^p$ boundedness
and the weighted weak $(1,1)$ type estimate, we have to verify
that the kernels satisfy the size and smoothness conditions described in Definition \ref{CZ}, see also
\eqref{first} and \eqref{second}.
We show how to do this for $R_{ij}$, the case of $R_t$ follows similar lines.

We first observe that the kernel
$W(\tau,y)$ in \eqref{GW} is defined for $(\tau,y) \in \mathbb{R}^{n+1}_+ \setminus \{(0,0)\}$.
We can extend this kernel to the whole space $\mathbb{R}^{n+1}\setminus \{(0,0)\}$ just by setting
$W(\tau,y)=0$, for $\tau\le0$ and $y\in\R^n\setminus\{0\}$.
Observe that this extended kernel is a smooth function in the $\tau$ and $y$ variables, whenever $(\tau,y)\neq(0,0)$.
Now the kernels of the operators $R_{ij}$ can be computed by taking the corresponding derivatives
of the above extended function $W$.
In order to get the size and the smoothness conditions of the kernels it is enough to get them for $\tau >0$.

The identity in \eqref{equation2 x} shows that the integral kernel of $R_{ij}$ is given by
$$R_{ij}^\Delta(\tau,y)=\partial_{y_iy_j}W(\tau,y)= \bigg(-\frac1{2\tau}\delta_{ij}+\frac{y_iy_j}{4\tau^2}\bigg)
\frac{e^{-|y|^2/(4\tau)}}{(4\pi\tau)^{n/2}},$$
for $\tau>0$ and $y\in\R^n$.  Then we get 
$$|R_{ij}^\Delta(\tau,y)|\leq C \frac{ e^{-|y|^2/(c\tau)}}{\tau^{n/2+1}}, \quad (\tau,y)\neq (0,0).$$
By taking into account the cases $|y|+\tau^{1/2}\le2\tau^{1/2}$
and $|y|+\tau^{1/2}>2\tau^{1/2}$ it is easy to see that
$$|R_{ij}^\Delta(\tau,y)|\leq\frac{C}{(\tau^{1/2}+|y|)^{n+2}}, \quad (\tau,y)\neq (0,0).$$
The other identities follow in a similar way.
We also observe that the sets $\Omega_\varepsilon$ appearing in part (A)
correspond to the truncations $\Omega_\varepsilon(0,0)$ in \eqref{conjuntitos} and \eqref{truncation}.
We leave to the reader to complete the rest of the proof.
\end{proof}

%%%%%%%%%%%%%%%%%%%%%%%%%%%%%%%%%%%%%%%%%%%%%%%%%%%%%%
\subsection{Heat equation: classical solvability and weighted Sobolev estimates for the Cauchy problem}\label{subsection:2}
%%%%%%%%%%%%%%%%%%%%%%%%%%%%%%%%%%%%%%%%%%%%%%%%%%%%%%

In this subsection we prove the heat equation counterpart of Theorem \ref{fundamental3}.

\begin{thm}\label{fundamental 3}
Consider the Cauchy problem for the heat equation
\begin{equation}\label{CauchyPL}
\begin{cases}
\partial_tv=\Delta v+f,&\hbox{for}~t >0,~x\in\R^n,\\
v(0,x)=g(x),&\hbox{for}~x\in\R^n.
\end{cases}
\end{equation}
\begin{itemize}
\item[({\bf A})]{\bf Classical solvability.}
Let $g=g(x)$ (resp. $f=f(t,x)$) a bounded function
with compact support in $\mathbb{R}^n$ (resp. in $\mathbb{R}^{n+1}_+$).
Then for every $(t,x)\in\mathbb{R}_+^{n+1}$ the integrals  
$$v(t,x)=\int_0^t\int_{\R^n}W(\tau,y)f(t-\tau,x-y)\,dy\,d\tau+\int_{\R^n}W(t,x-y)g(y)\,dy,$$
are well defined. If $f$ is also a $C^2$ function then $v$ is a classical solution to
\eqref{CauchyPL}. Moreover the following pointwise limit formulas hold:
\begin{equation}\label{equation3 x}
\partial_{ij}v(t,x)=\lim_{\varepsilon\rightarrow 0}\int_\varepsilon^t\int_{\mathbb{R}^n}
\partial_{y_iy_j}W(\tau,x-y)f(t-\tau,y)\,dy\,d\tau+\int_{\mathbb{R}^n}\partial_{y_iy_j}W(t,y)g(x-y)\,dy,
\end{equation}
and
\begin{equation}\label{equation3 t}
\partial_{t}v(t,x) =\lim_{\varepsilon \rightarrow 0}\int_\varepsilon^t \int_{\mathbb{R}^n} \partial_{\tau}W(\tau,x-y)f(t-\tau,y) \,dy\, d\tau
+\int_{\mathbb{R}^n}\partial_tW(t,y) g(x-y)\,dy+f(t,x).
\end{equation}
\item[({\bf B})]{\bf Weighted parabolic Sobolev estimates.}
In the case when $g=0$ and $f\in L^p(\mathbb{R}_+^{n+1},w)$,
$w\in A^*_p(\R^{n+1})$,
for some $1\le p< \infty$,
the limits above exist for a.e. $(t,x) \in \mathbb{R}_+^{n+1}$ and the following a priori estimates hold: for $1<p<\infty$,
$$\|\partial_{ij}v\|_{L^p(\R^{n+1}_+,w)}+\| \partial_tv\|_{L^p(\R^{n+1}_+,w)}
\le C_{n,p,w}\|f\|_{L^p(\R^{n+1}_+,w)},$$
and, in the case $p=1,$ for any $\lambda>0$,
$$w\big(\{(t,x)\in\R^{n+1}_+:|\partial_{ij}v|+|\partial_{t}v|>\lambda\}\big)
\le\frac{C_{n,p,w}}{\lambda}\|f \|_{L^1(\R^{n+1}_+,w)}.$$
\end{itemize}
\end{thm}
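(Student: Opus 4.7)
The strategy decomposes $v = v_1 + v_2$, where $v_1$ is the Duhamel integral of $f$ and $v_2(t,x) = \int_{\R^n} W(t, x-y) g(y)\, dy$ is the solution of the homogeneous Cauchy problem. The key reduction is to extend $f$ by zero for $t \leq 0$: the resulting $\tilde f$ has compact support in $\R^{n+1}$ (and is $C^2$, after a harmless time shift placing $\supp f$ strictly inside $\R^{n+1}_+$), and the whole-space Duhamel integral $u$ produced by Theorem \ref{fundamental 1} applied to $\tilde f$ satisfies $u \equiv 0$ on $\{t \leq 0\}$ and $u = v_1$ on $\R^{n+1}_+$.

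For Part (A), $v_2 \in C^\infty(\R^{n+1}_+)$ because $W(t, x - y)$ is smooth and rapidly decaying in $(t,x)$ for $t > 0$; differentiating under the integral gives $\partial_t v_2 = \Delta v_2$, and $v_2(0^+, \cdot) = g$ follows from the standard approximate-identity argument. For $v_1$, a single spatial differentiation produces an absolutely convergent integral. The second spatial derivative is only conditionally integrable in $\tau$ near $\tau = 0$, but the mean-zero identity
$$\int_{\R^n} \partial_{y_iy_j} W(\tau, y)\, dy = 0, \qquad \tau > 0,$$
allows us to write, for every $\varepsilon > 0$,
$$\int_\varepsilon^t\!\!\int_{\R^n} \partial_{y_iy_j} W(\tau, x - y) f(t-\tau, y)\, dy\, d\tau = \int_\varepsilon^t\!\!\int_{\R^n} \partial_{y_iy_j} W(\tau, x - y)\bigl[f(t-\tau, y) - f(t, x)\bigr] dy\, d\tau,$$
and the right side is absolutely convergent as $\varepsilon \to 0^+$ by Taylor's theorem and a Gaussian moment estimate, giving \eqref{equation3 x} with no compensating constant. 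For $\partial_t v_1$, Leibniz's rule in the upper limit $\tau = t$ produces the boundary contribution $\lim_{\tau \to 0^+} \int_{\R^n} W(\tau, x-y) f(t, y)\, dy = f(t, x)$, which is exactly the extra $+f(t,x)$ in \eqref{equation3 t}; the remaining singular integral is handled analogously via the mean-zero identity $\int_{\R^n} \partial_\tau W(\tau, y)\, dy = 0$. Summing the identities for $v_1$ and $v_2$ yields $\partial_t v = \Delta v + f$ on $\R^{n+1}_+$ with $v(0^+, \cdot) = g$.

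For Part (B), with $g = 0$ one has $v = v_1 = u|_{\R^{n+1}_+}$. The zero-extension preserves the weighted $L^p$ norm, $\|\tilde f\|_{L^p(\R^{n+1}, w)} = \|f\|_{L^p(\R^{n+1}_+, w)}$, so the strong estimates ($1 < p < \infty$) and the weak-type bound ($p = 1$) granted by Theorem \ref{fundamental 1}(B) for $\partial_{ij} u$ and $\partial_t u$ restrict immediately to the corresponding bounds for $\partial_{ij} v$ and $\partial_t v$ on $\R^{n+1}_+$. Similarly, the pointwise a.e.\ convergence of the truncated integrals transfers once one verifies that the time cutoff $\int_\varepsilon^t \int_{\R^n}$ of \eqref{equation3 x}--\eqref{equation3 t} and the parabolic cutoff $\iint_{\Omega_\varepsilon}$ of \eqref{equation2 x}--\eqref{equation2 t} produce the same a.e.\ limit on $\R^{n+1}_+$: the difference of the two truncated integrals is supported in a small parabolic neighborhood of $(t, x)$, and by the same Gaussian-moment computation that yielded $A_n$ and $B_n$ in Theorem \ref{fundamental 1}(A), it converges pointwise to precisely the correction terms that bridge the two formulas.

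The main technical obstacle is this last a.e.\ comparison for merely $L^p$ data: one must control the discrepancy between the two truncations pointwise a.e., via a vector-valued maximal estimate on the short-time/near-field region of integration, so that the Gaussian-moment computation transfers from smooth data to $L^p$ data. Once this identification is in place, Theorem \ref{fundamental 3} follows entirely from Theorem \ref{fundamental 1} and the elementary smoothness analysis of $v_2$.
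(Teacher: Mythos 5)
Your decomposition and zero-extension framework is exactly the one the paper uses, and your high-level reading of what must be checked is right. For Part~(A) you take a genuinely different route: rather than integrating by parts twice in $y$ over $\Sigma_\varepsilon=\{\tau>\varepsilon\}$ (which, as the paper notes, produces no $y$-boundary terms for compactly supported $f$ and directly yields \eqref{equation3 x}), you invoke the mean-zero identity $\int_{\R^n}\partial_{y_iy_j}W(\tau,y)\,dy=0$ and a first-order Taylor bound to make the truncated integral absolutely convergent. Both arguments are correct; your mean-zero subtraction is a clean alternative that explains directly why, unlike \eqref{equation2 x}, formula \eqref{equation3 x} carries no compensating constant $A_n\delta_{ij}$. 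Your handling of $\partial_t v_1$ via the approximate-identity boundary term at $\tau\to 0^+$ matches the paper's conclusion, obtained there through parametric derivation plus integration by parts in $\tau$.

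For Part~(B), your strategy is the paper's: restrict the whole-space solution $u$ to $\R^{n+1}_+$ and compare the time-truncation $\Sigma_\varepsilon$ with the parabolic truncation $\Omega_\varepsilon$. You correctly flag the one non-trivial step---a maximal control of the discrepancy between the two truncations for $L^p$ data---but you only state that it is needed; you do not prove it, and this is precisely the content of the paper's proof. The paper's key observation is the pointwise domination
$$
\big|(\chi_{\Omega_\varepsilon}-\chi_{\Sigma_{\varepsilon^2}})\,\chi_{\tau<t}\,\partial_{y_iy_j}W(\tau,x-y)\big|
\le \frac{1}{\varepsilon^{n+2}}\,\Psi\!\left(\frac{x-y}{\varepsilon},\frac{\tau^{1/2}}{\varepsilon}\right),
\quad \Psi(z,s^{1/2})=\frac{\chi_{\{|z|>1\}}\chi_{\{s<1\}}}{(|z|+s^{1/2})^{n+2}}\in L^1(\R^{n+1}),
$$
so that $\sup_{\varepsilon>0}$ of the discrepancy is dominated by an approximate-identity--type maximal operator, bounded on $L^p(\R^{n+1},w)$ for $w\in A_p^*$ (and weak-$(1,1)$ for $w\in A_1^*$). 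Combined with the pointwise identity for smooth $f$ (that the discrepancy converges to $\delta_{ij}A_n f(t,x)$, resp.~$B_n f(t,x)$) and the Banach principle of a.e.~convergence, this gives both the a.e.~existence of the $\Sigma_\varepsilon$-truncated limits and---because it shows $\partial_{ij}v=\partial_{ij}u$ a.e.~on $\R^{n+1}_+$---the transfer of the norm estimates, which therefore do not quite ``restrict immediately'' as you claim but follow once this identification is proved. Two small imprecisions worth noting: the set on which the two truncations differ is $\{\tau<\varepsilon^2,\ |x-y|>\varepsilon\}$, a short-time \emph{far-field} region (not near-field, and not compactly supported, only Gaussian-localized); and the truncation parameter for $\Sigma$ must be matched as $\varepsilon^2$ against $\Omega_\varepsilon$'s $\varepsilon$, though this is harmless for the $\varepsilon\to0$ limit. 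Supplying the displayed estimate would turn your sketch into a complete proof along the paper's lines.
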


\begin{proof}[Proof of Theorem \ref{fundamental 3} Part (A)] 
Observe that by linearity it is enough to solve the problems 
$$\begin{cases} 
\partial_tv^1-\Delta v^1=0,&\hbox{for}~t >0,~x\in\R^n,\\
v^1(0,x)=g(x),&\hbox{for}~x\in\mathbb{R}^n,
\end{cases}$$
and
$$\begin{cases} 
\partial_tv^2-\Delta v^2=f,&\hbox{for}~t >0,~x\in\R^n,\\
v^2(0,x)=0,&\hbox{for}~x\in\mathbb{R}^n,
\end{cases}$$
so $v=v_1+v_2$. We deal with $v_1$ and $v_2$ separately.

On one hand, the solution $v_1$ is given by the Gauss--Weierstrass semigroup $v^1(t,x)=e^{t\Delta}g(x)$.
This produces all the terms and properties in the statement related to the initial datum $g$.

The second problem will be solved by following the steps of the proof of Theorem \ref{fundamental 1}
with the appropriated changes due to the nature of the new ambient space $\mathbb{R}^{n+1}_+$.
We start as in the proof of Theorem \ref{fundamental 1} but replacing the set
$\Omega_\varepsilon$ by the set $\Sigma_\varepsilon = \{(\tau,y): \tau > \varepsilon\}$.
It is easy to check that integration by parts produces the first term in formula \eqref{equation3 x}.
For the derivative with respect to $t$, observe that $\partial \Sigma_\varepsilon= \{(\tau,y): \tau=\varepsilon\}$.
Then  by using parametric derivation and integration by parts we get  
\begin{align*}
\partial_t v^2(t,x) &= \lim_{\varepsilon \rightarrow 0}\int_\varepsilon^t\int_{\mathbb{R}^n}\partial_\tau{W}(\tau,y)f(t-\tau,x-y)\,d\tau\,dy\\
&\quad+\lim_{\varepsilon \rightarrow 0}\int_{\mathbb{R}^n}{W}(\varepsilon,y)f(t-\varepsilon,x-y)\,dy
- \int_{\mathbb{R}^n}{W}(t,y)f(0,x-y)\,dy \\
&= \lim_{\varepsilon \rightarrow 0}\int_\varepsilon^t \int_{\mathbb{R}^n}\partial_\tau{W}(\tau,y) f(t-\tau,x-y)\,d\tau\,dy
+\lim_{\varepsilon \rightarrow 0}  \int_{\mathbb{R}^n}{W}(\varepsilon,y)f(t,x-y)\,dy\\
&\quad+\lim_{\varepsilon \rightarrow 0}\int_{\mathbb{R}^n}{W}(\varepsilon,y)\big(f(t-\varepsilon,x-y)-f(t,x-y)\big)\,dy\\
&=  \lim_{\varepsilon \rightarrow 0} \int_\varepsilon^t \int_{\mathbb{R}^n} \partial_\tau{W}(\tau,y) f(t-\tau,x-y)\,d\tau\,dy+f(t,x).
\end{align*}
\end{proof}

\begin{proof}[Proof of Theorem \ref{fundamental 3} Part (B)] 
Notice that 
the set $\Sigma_\varepsilon$ does not correspond to the standard truncations for Calder\'on--Zygmund operators,
see \eqref{truncation} and \eqref{conjuntitos}. 
Therefore we can not apply the Calder\'on--Zygmund machinery for the whole space.
In order to prove the results we will do a comparison argument with the global case.
Given $f\in L^p(\mathbb{R}^{n+1}_+),$ consider the difference
$$\bigg|\iint_{\Omega_\varepsilon }\partial_{y_iy_j}{W}(\tau,y) f(t-\tau,x-y)\chi_{0<\tau<t} \,dy\, d\tau- 
\int_{\varepsilon^2}^t \int_{\mathbb{R}^n}\partial_{y_iy_j}{W}(\tau,y) f(t-\tau,x-y) \,dy\, d\tau\bigg|.$$
We have 
\begin{align*}
\big|(\chi_{\Omega_\varepsilon}-\chi_{\Sigma_{\varepsilon^2}})\chi_{\tau<t}\partial_{y_iy_j}{W}(\tau,x-y)\big|
&\le \chi_{|y|>\varepsilon}\chi_{\tau < \varepsilon^2}\frac1{(|y|+\tau^{1/2})^{n+2}}\\
&=\chi_{\frac{|y|}{\varepsilon}>1}\chi_{\frac{\tau}{\varepsilon^2}<1}
\frac1{\varepsilon^{n+2}\big(\frac{|y|}{\varepsilon}+\frac{\tau^{1/2}}{\varepsilon}\big)^{n+2}}  
=\frac1{\varepsilon^{n+2}}\Psi\bigg(\frac{y}{\varepsilon},\frac{\tau^{1/2}}{\varepsilon}\bigg),
\end{align*}
with $\Psi(z,s^{1/2})=(|z|+s^{1/2})^{-(n+2)}\chi_{|z| >1}\chi_{s<1}$. It is easy to see that
$\displaystyle\int_{\mathbb{R}^{n+1}} \Psi(z,s^{1/2})\,dz\,ds\leq C$.
Therefore
\begin{multline*}
\sup_\varepsilon\bigg|\iint_{\Omega_\varepsilon }\partial_{y_iy_j}{W}(\tau,y) f(t-\tau,x-y)\chi_{0<\tau<t} \,dy\, d\tau- 
\int_{\varepsilon^2}^t \int_{\mathbb{R}^n}\partial_{y_iy_j}{W}(\tau,y) f(t-\tau,x-y) \,dy\, d\tau\bigg|\\
\le \sup_\varepsilon \int_{\mathbb{R}^{n+1}} \frac1{\varepsilon^{n+2}}\Psi\bigg(\frac{x-y}{\varepsilon},
\frac{\tau^{1/2}}{\varepsilon}\bigg)|f(t-\tau,y)|\,dy\,d\tau,
\end{multline*}
and this operator is bounded from $L^p(\R^{n+1},w)$ into itself
for $w\in A_p^*(\R^{n+1})$ and from $L^1(\R^{n+1},w)$ into weak-$L^1(\R^{n+1},w)$ for weights $w\in A_1^*(\R^{n+1})$.
Now we remind that for good enough functions we have, see \eqref{equation2 x} and \eqref{equation3 x},
$$\lim_{\varepsilon \rightarrow 0}\bigg\{\iint_{\Omega_\varepsilon}-\iint_{\Sigma_{\varepsilon^2}}\bigg\}\,
\partial_{y_iy_j}{W}(\tau,y)f(t-\tau,x-y)\,d\tau\,dy=\delta_{ij}A_nf(t,x).$$
An application of the Banach principle of almost everywhere convergence
gives the proof of statement \eqref{equation3 x} in Theorem \ref{fundamental 3}.
For \eqref{equation3 t} we can proceed similarly, details are left to the interested reader.
\end{proof}

%%%%%%%%%%%%%%%%%%%%%%%%%%%%%%%%%%%%%%%%%%%%%%%%%%%%%%
\section{Weighted estimates and a.e. convergence: proof of Theorems \ref{fundamental1} and \ref{fundamental3}}\label{pf1}
%%%%%%%%%%%%%%%%%%%%%%%%%%%%%%%%%%%%%%%%%%%%%%%%%%%%%%

In this section we prove the Sobolev estimates for the parabolic harmonic oscillator equation.
The structure of the proofs and several computations follow a parallel path to that of Section \ref{pre}.

%%%%%%%%%%%%%%%%%%%%%%%%%%%%%%%%%%%%%%%%%%%%%%%%%%%%%%
\subsection{Basic preliminaries}
%%%%%%%%%%%%%%%%%%%%%%%%%%%%%%%%%%%%%%%%%%%%%%%%%%%%%%

Let $h_k(r)$, $r\in\R$, be the collection of Hermite functions on the real line: 
$$h_k(r) = (\pi^{1/2} 2^k k! )^{-1/2} H_k(r)e^{-r^2/2},\quad k =0,1,2, \dots.$$
Here $H_k(r)$ denotes the classical Hermite polynomial of degree $k$.
The multidimensional Hermite functions are
$h_\alpha(x)=h_{\alpha_1}(x_1)\cdots h_{\alpha_n}(x_n)$, for $\alpha=(\alpha_1,\dots,\alpha_n)\in\N^n_0$,
$x\in\R^n$. Let $H=-\Delta+|x|^2$ be the harmonic oscillator operator, which is a positive
and symmetric operator in $L^2(\R^n)$ with domain $C^\infty_c(\R^n)$. It is well known that 
the Hermite functions give the spectral decomposition of $H$ in $L^2(\R^n)$ with
$Hh_\alpha=(2|\alpha|+n)h_\alpha$, where $|\alpha|=\alpha_1+\cdots+\alpha_n$.
The heat semigroup $\{e^{-\tau H}\}_{\tau>0}$ is given by integration against a kernel, see \cite{Thangavelu}.
Indeed, for functions $\varphi \in L^p(\mathbb{R}^n)$,
\begin{equation}\label{Hheat}
\begin{aligned}
e^{-\tau H}\varphi(x)&= \int_{\mathbb{R}^n}\mathcal{W}_\tau(x,y)\varphi(y)\,dy \\
&=\int_{\mathbb{R}^n} \frac{1}{(2\pi \sinh 2\tau)^{n/2}}
\exp\big(-\tfrac{1}{2}|x-y|^2\coth 2\tau - x\cdot y \tanh \tau \big)\varphi(y)\,dy.
\end{aligned}
\end{equation}
As in the previous sections, we define
\begin{equation}\label{FHermite}
(\partial_t +H)^{-1}=\int_0^\infty e^{-\tau \partial_t} \circ e^{-\tau H}\,d\tau.
\end{equation}

%%%%%%%%%%%%%%%%%%%%%%%%%%%%%%%%%%%%%%%%%%%%%%%%%%%%%%
\subsection{Proof of Theorem \ref{fundamental1} Part (A)}
%%%%%%%%%%%%%%%%%%%%%%%%%%%%%%%%%%%%%%%%%%%%%%%%%%%%%%

Now we shall start with the proof of Theorem \ref{fundamental1}, that 
follows closely the proof of Theorem \ref{fundamental 1}.    
Assume that $f=f(t,x)$ is a bounded function with compact support. As
$\coth2\tau= \frac{1+(\coth \tau)^2}{2\coth \tau }$,
we have, from \eqref{FHermite} and \eqref{Hheat},
\begin{equation}\label{fundamental 2}
\begin{aligned}
(\partial_t&+H)^{-1}f(t,x) =  \int_0^\infty\int_{\mathbb{R}^n}\mathcal{W}_\tau(x,y)f(t-\tau,y)\,dy\,d\tau \\
&=\int_0^\infty \int_{\mathbb{R}^n}\frac1{(2 \pi \sinh 2\tau)^{n/2}}
\exp\big(-\tfrac14(|y|^2 \coth \tau+ |2x-y|^2 \tanh \tau)\big)f(t-\tau,x-y)\,dy\,d\tau \\
&=  \int_0^\infty \int_{\mathbb{R}^n}\frac1{(2 \pi \sinh 2\tau)^{n/2}}
\exp\big(-\tfrac14(|x-y|^2 \coth \tau+ |x+y|^2 \tanh \tau)\big)f(t-\tau,y)\,dy\,d\tau.
\end{aligned}
\end{equation}
We introduce the following notation
\begin{eqnarray*}
  S(\tau):= \frac1{(2 \pi \sinh 2\tau)^{n/2}}\, , &&\quad 
  H(\tau,y):= \exp(-\tfrac14 |y|^2 \coth \tau) \\
 \nonumber  G(\tau,x,y):=\exp (-\tfrac14 |2x-y|^2 \tanh \tau)\, ,&& \quad 
   F(\tau,x,y):=f(t-\tau,x-y).
\end{eqnarray*}
In particular, from \eqref{fundamental 2} we can write
$$u(t,x) = \int_0^\infty \int_{\mathbb{R}^n} S(\tau)H(\tau,y) G(\tau,x,y) F(\tau,x,y)\,dy\,d\tau.$$
In this proof we will use sometimes just $S,H,G,F$ in order to produce more readable formulas.
We shall need the following easy estimates.

\begin{rem}\label{seno}
Let $A$ be a positive constant so that $\tau < A$. There exists a constant $C_A$ such that 
  \begin{eqnarray*}
 \nonumber \sinh 2 \tau = \frac{e^{2\tau}- e^{-2\tau}}{2} =  e^{\theta} 4\tau
 \sim C_A \tau\, , && 
 \cosh 2 \tau =\frac{e^{2\tau}+ e^{-2\tau}}{2} \sim C_A, \\
 \coth 2 \tau \sim \frac{C_A}{\tau}\, , && \, \,\tanh \tau \sim C_A \tau.
\end{eqnarray*}
\end{rem}

\begin{lem}\label{lema2}
We have the following facts.
\begin{itemize}
\item[(i)] $\displaystyle \lim_{\varepsilon \rightarrow 0}\varepsilon^{-n/2}\int_{|y|< \varepsilon}e^{-|y|^2/(c\varepsilon)}\,dy=0$.
\item[(ii)]  $\displaystyle \int_0^{\varepsilon^2}\int_{|y|=\varepsilon} S(\tau) \partial_{y_i} H \frac{y_i}{|y|}\,d\sigma(y)\,d\tau \rightarrow -\frac{1}{n}\frac{1}{\Gamma(\frac{n}{2})}\int_\frac{1}{4}^\infty e^{-u}u^\frac{n}{2}\,\frac{du}{u}$, as $\varepsilon \rightarrow 0.$
 \item[(iii)]  $\displaystyle \int_{|y|<\varepsilon}S(\varepsilon^2)H(\varepsilon^2,y)\,dy \rightarrow
 \frac{1}{\Gamma(\frac{n}{2})}\int_0^\frac{1}{4}
 e^{-u}u^\frac{n}{2}\,\frac{du}{u}$, as $\varepsilon \rightarrow 0.$
\item[(iv)] $\displaystyle \int_0^{\varepsilon^2} \int_{|y|=\varepsilon} S H\,d\sigma(y)\,d\tau\rightarrow 0$, as $\varepsilon \rightarrow 0.$
\end{itemize}
  \end{lem}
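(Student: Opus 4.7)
The plan is to treat the four parts as applications of the asymptotics in Remark \ref{seno} combined with one recurring change of variables, namely $u=|y|^2\coth\tau/4$ or, after integrating in $y$ first, $u=\varepsilon^2/(4\tau)$. In all cases the strategy is: use spherical symmetry to reduce surface/ball integrals to one-dimensional radial integrals, replace the hyperbolic factors by their $\tau\to 0^+$ equivalents from Remark \ref{seno} (so that $S(\tau)\sim(4\pi\tau)^{-n/2}$, $\coth\tau\sim 1/\tau$, and $\sinh 2\tau\,\coth\tau=2\cosh^2\tau\to 2$), and then land on an incomplete Gamma integral of the form $\int u^{n/2}e^{-u}\,du/u$ with explicit limits. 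I expect no serious obstacle; the only thing to be careful about is keeping track of the constants and of the limits of integration after the substitution, since the equivalences hidden in Remark \ref{seno} must be turned into honest equalities in the limit via dominated convergence.

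For (i), the substitution $y=\sqrt{\varepsilon}\,z$ yields
\[
\varepsilon^{-n/2}\int_{|y|<\varepsilon}e^{-|y|^2/(c\varepsilon)}\,dy=\int_{|z|<\sqrt{\varepsilon}}e^{-|z|^2/c}\,dz,
\]
and this tends to $0$ as $\varepsilon\to 0$ by monotone (or dominated) convergence, since the domain shrinks to the origin.

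For (ii), a direct computation gives $\partial_{y_i}H=-\tfrac{y_i}{2}\coth\tau\cdot H$, so the integrand is $-\frac{y_i^2}{2|y|}\coth\tau\,S(\tau)H(\tau,y)$. On the sphere $|y|=\varepsilon$ the symmetry identity $\int_{|y|=\varepsilon}y_i^2\,d\sigma=\frac{\varepsilon^2}{n}|S^{n-1}|\varepsilon^{n-1}$ collapses the surface integral, leaving
\[
-\frac{|S^{n-1}|\varepsilon^n}{2n}\int_0^{\varepsilon^2}S(\tau)\coth\tau\,e^{-\varepsilon^2\coth\tau/4}\,d\tau.
\]
Using Remark \ref{seno} and the substitution $u=\varepsilon^2\coth\tau/4$, the factor $S(\tau)\coth\tau\,d\tau$ combines into $(\pi\varepsilon^2)^{-n/2}u^{n/2}\,du/u$ up to an error that vanishes as $\varepsilon\to 0$ by dominated convergence, and the range $\tau\in(0,\varepsilon^2)$ becomes $u\in(1/4,\infty)$. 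Together with $|S^{n-1}|=2\pi^{n/2}/\Gamma(n/2)$ this gives the asserted limit $-\frac{1}{n\Gamma(n/2)}\int_{1/4}^\infty u^{n/2}e^{-u}\,du/u$.

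Part (iii) is parallel but done in the radial variable. Passing to polar coordinates and substituting $u=r^2\coth(\varepsilon^2)/4$ turns the integral into
\[
\frac{|S^{n-1}|\,2^{n-1}}{(2\pi\sinh 2\varepsilon^2)^{n/2}(\coth\varepsilon^2)^{n/2}}\int_0^{\varepsilon^2\coth\varepsilon^2/4}u^{n/2}e^{-u}\,\frac{du}{u}.
\]
The identity $\sinh 2\varepsilon^2\cdot\coth\varepsilon^2=2\cosh^2\varepsilon^2\to 2$ simplifies the prefactor to $|S^{n-1}|/(4\pi)^{n/2}\cdot 2^{n-1}=1/\Gamma(n/2)$, while $\varepsilon^2\coth\varepsilon^2\to 1$ sends the upper endpoint to $1/4$, which is exactly the claim. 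Finally, for (iv) the same substitution $u=\varepsilon^2/(4\tau)$ applied to $\int_0^{\varepsilon^2}S(\tau)e^{-\varepsilon^2\coth\tau/4}|S^{n-1}|\varepsilon^{n-1}\,d\tau$ produces a factor of $\varepsilon$ in front of a fixed (finite) incomplete Gamma integral $\int_{1/4}^{\infty}u^{n/2-2}e^{-u}\,du$, so the quantity is $O(\varepsilon)\to 0$. In each case the key quantitative input is Remark \ref{seno}; the rest is bookkeeping of elementary integrals.
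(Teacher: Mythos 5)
Your proof is correct and follows essentially the same route as the paper: reduce each integral to a radial one, use the small-$\tau$ asymptotics of Remark~\ref{seno}, and land on the incomplete Gamma integrals $\int u^{n/2}e^{-u}\,du/u$. The one place where you genuinely diverge is (iii): you perform the exact radial substitution $u=r^2\coth(\varepsilon^2)/4$, which factors the whole integral into a prefactor times $\int_0^{\varepsilon^2\coth\varepsilon^2/4}u^{n/2}e^{-u}\,du/u$, and then both factors converge directly. The paper instead adds and subtracts the ``naive'' term $(2\pi\varepsilon^2)^{-n/2}e^{-|y|^2/(4\varepsilon^2)}$ and controls the difference with the Mean Value Theorem applied to $N(u,v)$. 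Your version avoids the comparison altogether and is arguably cleaner. For (ii) you use the same substitution $u=\varepsilon^2\coth\tau/4$; after the angular reduction it transforms the integral exactly into $-\frac{|S^{n-1}|}{n(2\pi)^{n/2}}\int_{\varepsilon^2\coth\varepsilon^2/4}^\infty\bigl(\frac{8u}{16u^2-\varepsilon^4}\bigr)^{1-n/2}e^{-u}\,du$, whose pointwise limit and domination (e.g.~by $C u^{|n/2-1|}e^{-u}$ for $\varepsilon$ small, using that $16u^2-\varepsilon^4$ is bounded below on $u\ge 1/4$) give exactly the stated constant; you should spell this domination out since the phrase ``up to an error that vanishes by dominated convergence'' glosses over it. The paper instead writes the integrand as $M(\sinh\tau,\cosh\tau)-M(\tau,1)$ plus the model term $M(\tau,1)$ and uses the MVT to kill the difference $I_{\varepsilon 1}$. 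Both are correct; yours trades the MVT bound for a DCT argument inside the exact substituted integral. Parts (i) and (iv) coincide with the paper's proofs.
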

  
\begin{proof}
For (i), use that
$$\varepsilon^{-n/2}\int_{|y|<\varepsilon} e^{-|y|^2/(c\varepsilon)}\,dy  = \int_{|z|< \sqrt{\varepsilon}}e^{-|z|^2/c}\,dz.$$
Observe that the double integral in (ii) equals to
\begin{align*}
-&\frac1{2n (2\pi)^{n/2}}  \int_0^{\varepsilon^2} \int_{|y|=\varepsilon}\frac{|y|\coth\tau
\exp(-\tfrac14|y|^2\coth \tau)}{( \sinh 2\tau)^{n/2}}\,d\sigma(y)\,d\tau\\
&=-\frac{1}{2^{n/2+1}n(2\pi)^{n/2}}\int_0^{\varepsilon^2}\int_{|y|=\varepsilon}
\frac{|y|\exp(-\frac{1}{4}|y|^2\frac{ \cosh\tau}{\sinh\tau})}{(\sinh\tau)^{\frac{n}{2}+1}(\cosh\tau)^{\frac{n}{2}-1}}
\,d\sigma(y)\,d\tau\\
&=-\frac{1}{2^{n/2+1}n(2\pi)^{n/2}}\int_0^{\varepsilon^2}\int_{|y|=\varepsilon}|y|
\bigg(\frac{\exp(-\frac{1}{4}|y|^2\frac{ \cosh\tau}{\sinh\tau})}{(\sinh\tau)^{\frac{n}{2}+1}(\cosh\tau)^{\frac{n}{2}-1}}
-\frac{\exp(-\frac{|y|^2}{4\tau})}{\tau^{\frac{n}{2}+1}} \bigg)\,d\sigma(y)\,d\tau\\
&\quad-\frac{1}{2^{n/2+1}n(2\pi)^{n/2}}\int_0^{\varepsilon^2} \int_{|y|=\varepsilon}
|y|\frac{\exp(-\frac{|y|^2}{4\tau})}{\tau^{\frac{n}{2}+1}}\,d\sigma(y)\,d\tau \\
&=:-\frac{1}{2^{n/2+1}n(2\pi)^{n/2}}( I_{\varepsilon 1}+ I_{\varepsilon 2}).
 \end{align*}
We write
$$I_{\varepsilon 1} = \int_0^{\varepsilon^2}\int_{|y|=\varepsilon}|y|
\big(M(\sinh \tau,\cosh \tau)-M(\tau,1)\big)\,d\sigma(y)\,d\tau,$$
with $\displaystyle M(u,v)=\frac{1}{u^{\frac{n}{2}+1} v^{\frac{n}{2}-1} }e^{-\frac{|y|^2v}{4u}}$.   
Now
$$\nabla M(u,v) =-\frac{1}{u^{\frac{n}{2}+1} v^{\frac{n}{2}-1} }e^{-\frac{|y|^2v}{4u}}
\bigg(\big(\tfrac{n}{2}+1\big)\frac{1}{u } -\frac{|y|^2v}{4u^2}, 
\big(\tfrac{n}{2} -1\big)\frac1{v}+ \frac{|y|^2}{4u}\bigg),$$
so that
$$|\nabla M(u,v)|\leq C\bigg(\frac{1}{u}+\frac{1}{v}\bigg)\frac{1}{u^{\frac{n}{2}+1}v^{\frac{n}{2}-1}}e^{-\frac{|y|^2v}{8u}}.$$
For  $\tau <1$ we have  
 $\sinh \tau- \tau \sim \cosh \tau-1\sim \tau^2$. Hence by the Mean Value Theorem we get 
\begin{align*}
| I_{\varepsilon 1}| &\le
C \int_0^{\varepsilon^2} \int_{|y|=\varepsilon} \tau^2\Big(\frac{|y| }{u} +\frac{|y|}{v}\Big)\frac{1}{u^{\frac{n}{2}+1} v^{\frac{n}{2}-1}} e^{-\frac{|y|^2}{8}\frac{v}{u}} d\sigma(y) d\tau\\
&\le C_n  \int_0^{\varepsilon^2}  \varepsilon^n \frac{1}{\tau^{\frac{n}{2}} } e^{-\frac{\varepsilon^2}{16}\frac{1}{\tau}} d\tau
= C\varepsilon^2\int_1^\infty u^{n/2-2}e^{-\frac{u}{16}}\,du\rightarrow 0,\quad\hbox{as}~\varepsilon\to0.
\end{align*}
On the other hand, integration using polar coordinates gives
$$I_{\varepsilon 2}=\int_0^{\varepsilon^2}\int_{|y|=\varepsilon}\frac1{\tau^{n/2}}\frac{|y|}{ \tau}
e^{-\frac{|y|^2}{4\tau}}\,d\sigma(y)\,d\tau
=\frac{4^\frac{n}{2}|S^{n-1}|}{\pi^\frac{n}{2}}\int_{1/4}^\infty u^\frac{n}{2} e^{-u}\frac{du}{u}.$$
For the proof of (iii) we will follow parallel ideas. Indeed, 
$$\int_{|y|<\varepsilon}S(\varepsilon^2) H(\varepsilon^2,y)\,dy
  = 2^{-\frac{n}{ 2}}\bigg( \int_{|y|<\varepsilon}\frac{ e^{-\frac{|y|^2}{4}\frac{1}{\sinh\varepsilon^2}\cosh\varepsilon^2 }}{(2\pi\sinh\varepsilon^2)^\frac{n}{2}(\cosh\varepsilon^2)^\frac{n}{2}} 
 \,dy \pm  \int_{|y|<\varepsilon}\frac{ e^{-\frac{|y|^2}{4}\frac{1}{\varepsilon^2} }}{(2\pi\varepsilon^2)^\frac{n}{2}}\,dy
 \bigg).$$
For the difference of the integrals we consider the function 
$\displaystyle N(u,v) = \frac{ e^{-\frac{|y|^2}{4}\frac{1}{u}v }}{(2\pi u)^\frac{n}{2}(v)^\frac{n}{2}} $. Then we have to estimate 
$$\int_{|y|<\varepsilon} \Big| N(\sinh \varepsilon^2,\cosh \varepsilon^2)-N(\varepsilon^2,1) \Big|\,d\sigma(y).$$
Following step by step the arguments in the proof of (i), we get that 
the difference of the integrals is bounded by 
$$C \int_{|y|< \varepsilon} \varepsilon^{-n+2} e^{-\frac{|y|^2}{8\varepsilon^2}} dy = C_n \varepsilon^2 \int_{|y| <1} e^{-\frac{|z|^2}{8}} dz \rightarrow 0.$$
Finally,
\begin{equation*}
\frac{1}{(2\pi)^\frac{n}{2}}\int_{|y|<\varepsilon}\frac{1}{(\varepsilon^2)^\frac{n}{2}} e^{-\frac{|y|^2}{4\varepsilon^2}} \,dy
    = \frac{|S^{n-1}|}{(2\pi)^\frac{n}{2}}\int_0^\varepsilon\frac{r^n}{\varepsilon^n} e^{-\frac{r^2}{4\varepsilon^2}} \,\frac{dr}{r}
    =\frac{2^n|S^{n-1}|}{2 (2\pi)^\frac{n}{2}}\int_0^\frac{1}{4}u^\frac{n}{2}e^{-u}\,\frac{ d u}{u}.
\end{equation*}
Summing up,  we get
 $$\lim_{\varepsilon \rightarrow 0} \int_{|y|<\varepsilon} S(\varepsilon^2)H(\varepsilon^2,y)\,dy  =  \frac{1}{\Gamma(\frac{n}{2})}\int_0^\frac{1}{4}u^\frac{n}{2}e^{-u}\,\frac{ d u}{u}. $$
For (iv) we just observe that 
$$\int_0^{\varepsilon^2} \int_{|y|=\varepsilon}S(\tau)H(\tau,y)\,dy\,d\tau
\leq  C_n \int_0^{\varepsilon^2} \frac1{\tau^{n/2}}\varepsilon^{n-1} e^{-\frac{\varepsilon^2}{c\tau}}\,d\tau
= C_n \varepsilon \int_{1/4} ^\infty u^{n/2-1} e^{-u}\, \frac{du}{u}\rightarrow0.$$
\end{proof}

Let us then continue with the proof of Theorem \ref{fundamental1} Part (A).
A rather parallel argument to the one we gave in Theorem \ref{fundamental 1}
gives the absolutely convergence of the integral in the statement of Theorem \ref{fundamental1}.
Since the product $G(x,y,\tau)F(x,y,\tau)$ is smooth with compact support,
then by dominated convergence,
$$\partial_{x_ix_i} u(t,x) =
\int_0^\infty \int_{\mathbb{R}^n} S(\tau)H(\tau,y) \partial_{x_ix_i}\big(G(\tau,x,y)F(\tau,x,y)\big)\,dy\,d\tau.$$
The following identities are easy to check
\begin{align*}
\partial_{x_i} G &= -2 \partial_{y_i} G, \qquad
\partial_{x_i}F = -\partial_{y_i} F, \qquad
\partial_{x_ix_i} G  = 4 \partial_{y_iy_i} G, \\
\partial_{x_ix_i}F&=\partial_{y_iy_i} F, \qquad
\partial_{x_ix_i}(GF) = 4 F\partial_{y_iy_i}G
+4\partial_{y_i} G\partial_{y_i} F +G \partial_{y_iy_i} F.
\end{align*}
By using the last list of formulas we have
$$\partial_{x_ix_i}u(t,x)=
\int_0^\infty \int_{\mathbb{R}^n} S(\tau)H(\tau,y)\Big[ 4 F\partial_{y_iy_i}G
+4 \partial_{y_i} G\partial_{y_i} F +G\partial_{y_iy_i} F\Big]\,dy\,d\tau.$$
A parallel argument as the one needed in order to prove the existence
of $u(t,x)$ shows that the last three  obvious integrals are  absolutely convergent.
Then, given  $\Omega_\varepsilon=\{ (t,y): \,\max(\tau^\frac{1}{2},|y|)>\varepsilon\}$, we can write 
\begin{equation}\label{id}
\begin{aligned}
\partial_{x_ix_i}u(t,x)&= \lim_{\varepsilon\rightarrow0}\int\int_{\Omega_\varepsilon}\big(4 S H F \partial_{y_iy_i}G
 + 4 S H \partial_{y_i} G\,\, \partial_{y_i} F + S H G\partial_{y_iy_i}F \big)\,dy\,d\tau\\
&=:\lim_{\varepsilon \rightarrow 0} \Big( I^\varepsilon_1+I^\varepsilon_2+I^\varepsilon_3 \Big).
\end{aligned}
\end{equation}
By integration by parts
$$I^\varepsilon_2 = -4\iint_{\Omega_\varepsilon} S(\tau)  \partial_{y_i}( H \partial_{y_i}G ) F \, dy\, d\tau
+  4\iint_{\partial\Omega_\varepsilon} S(\tau)H(\tau,y) (\partial_{y_i}G) \,F  \nu_i\,d\sigma(y,\tau),$$
where $\nu_i$ is the $i$th component of the outer unit normal vector of $\partial \Omega_\varepsilon$. 
As in the proof of Theorem \ref{fundamental 1} we decompose
$\partial\Omega_\varepsilon=\partial\Omega_\varepsilon^1+\partial\Omega_\varepsilon^2+\partial\Omega_\varepsilon^3$. 
Parallel to that case we have 
$$\iint_{\partial \Omega_\varepsilon^1} S(\tau)H(\tau,y) (\partial_{y_i}G) \,F
  \nu_i\,d\sigma(y,\tau) =0 = \iint_{\partial \Omega_\varepsilon^3} S(\tau)H(\tau,y) (\partial_{y_i}G) \,F  \nu_i\,d\sigma(y,\tau). $$
As $\displaystyle\partial_{y_i}G= \frac12(2x_i-y_i) \tanh\tau e^{-\frac{|2x-y|^2}{4}\tanh\tau},$
by using Remark \ref{seno} we get
\begin{multline}
\iint_{\partial\Omega_\varepsilon^2} S(\tau)H \partial_{y_i}G \,F \nu_i\,d\sigma(y,\tau)
   \leq C \int_0^{\varepsilon^2}\int_{|y|=\varepsilon} \frac{(\tanh\tau)^\frac12}{(\sinh\tau)^{\frac{n}{2}}}
         e^{-\frac{|y|^2}{4}\coth\tau} \,d\sigma(y)\,d\tau\\
   \label{estimate} \leq  C \int_0^{\varepsilon^2}\varepsilon^{n-1} \frac{\tau \tau^\frac12}{\tau^\frac{n}{2}}
           e^{-\frac{\varepsilon^2}{4\tau}}\,\frac{d\tau}{\tau}=C \varepsilon^2\int_1^\infty u^{\frac{n}{2}-1-\frac{1}{2}}e^{-u/4}\frac{du}{u}
        \rightarrow 0,~\hbox{as}~\varepsilon \rightarrow 0.
\end{multline}
Regarding $I_3^\varepsilon$, integration by parts gives
$$I^\varepsilon_3=-\iint_{\Omega_\varepsilon} S(\tau)\partial_{y_i}(H G) \partial_{y_i} F\,dy\, d\tau
+\iint_{\partial\Omega_\varepsilon}S(\tau)HG \partial_{y_i}F \nu_i\,dy\,d\tau,~\hbox{for}~i=1,\ldots,n.$$
By the same argument as we have used for $I^\varepsilon_2$, we get
$$\iint_{\partial\Omega_\varepsilon} S(\tau)H G \partial_{y_i} F \nu_i\,d\sigma(y, \tau)
\leq C \varepsilon \int_1^\infty u^{\frac{n}{2}-1}e^{-u/4} \,\frac{du}{u} \rightarrow 0,
~\hbox{as}~\varepsilon \rightarrow 0.$$
Hence
$$I^\varepsilon_3= -\iint_{\Omega_\varepsilon} S(\tau) \partial_{y_i}(H G) \partial_{y_i}F \,dy\,d\tau.$$
Again integration by parts gives 
$$I^\varepsilon_3= \iint_{\Omega_\varepsilon}S(\tau)\partial^2_{y_i^2}(H G) F \,dy\,d\tau
-\iint_{\partial\Omega_\varepsilon}S(\tau)\partial_{y_i}(H G) F \nu_i \,d\sigma(y)\,d\tau,~\hbox{for}~i=1,\ldots,n.$$
Parallel to $I^\varepsilon_2$ we have 
\begin{align*}
\iint_{\partial\Omega_\varepsilon}&S(\tau)\partial_{y_i}(H G) F \nu_i \,d\sigma(y,\tau)
    =\iint_{\partial\Omega_\varepsilon^2}S(\tau)\partial_{y_i}(H G) F \nu_i \,d\sigma(y,\tau)\\
&=-\int_0^{\varepsilon^2}\int_{|y|=\varepsilon} S(\tau)(\partial_{y_i}H) G F  \frac{y_i}{|y|}\,d\sigma(y)\,d\tau
  - \int_0^{\varepsilon^2}\int_{|y|=\varepsilon} S(\tau)H  (\partial_{y_i}G)  F  \frac{y_i}{|y|}\,d\sigma(y)\,d\tau\\
&=:I^\varepsilon_{31}+I^\varepsilon_{32}.
\end{align*}
By the same argument as in (\ref{estimate}) we get
$\lim_{\varepsilon \rightarrow 0} I^\varepsilon_{32} = 0. $
For $I^\varepsilon_{31}$ we proceed as follows:
\begin{align*}
I_{31}^\varepsilon&=- \int_0^{\varepsilon^2}\int_{|y|=\varepsilon}S(\tau) \partial_{y_i}H
\Big( G(\tau,x,y)F(\tau,x,y)- G(0,x,0)F(0,x,0)\Big)\frac{y_i}{|y|}\, d \sigma(y)\,d\tau \\
&\quad-\int_0^{\varepsilon^2}\int_{|y|=\varepsilon}S(\tau) 
\partial_{y_i}H G(0,x,0)F(0,x,0) \frac{y_i}{|y|}\, d \sigma(y)\,d\tau \\
&=: I^\varepsilon_{311}+I^{\varepsilon}_{312}.
\end{align*}
For $I^\varepsilon_{311}$, let $ \displaystyle g(\rho,x,z)= G(\rho,x,z)F(\rho,x,z)
=e^{-\frac{|2x-z|^2}{4}\tanh\tau}f(t-\rho,x-z)$. Since $\partial_{z_i}g$ and $\partial_\rho g $
 are smooth functions with compact support,
 $ \displaystyle \big|\nabla_{z,\rho}g(\rho,x,z)\big| \leq C$.  By Lemma \ref{lema2},
\begin{align*}
|I^\varepsilon_{311} | & \leq C\int_0^{\varepsilon^2}\int_{|y|=\varepsilon} S \partial_{y_i}H\cdot
(|y|+\tau) \frac{y_i}{|y|}\,d\sigma(y)\,d\tau
\leq C \varepsilon  \int_0^{\varepsilon^2}\int_{|y|=\varepsilon} S \partial_{y_i}H \frac{y_i}{|y|}\,d\sigma(y)\,d\tau,
\end{align*}
which vanishes as $\varepsilon \rightarrow 0$.
Now we compute exactly the integral $I^\varepsilon_{312}$. As $G(0,x,0)=1$, Lemma \ref{lema2} gives
$$I^\varepsilon_{312} = -f(t,x)\int_0^{\varepsilon^2}\int_{|y|=\varepsilon}S \partial_{y_i}H \frac{y_i}{|y|} \, d \sigma(y)\,d\tau
=f(t,x) \frac{1}{n\Gamma(\frac{n}{2})} \int_{1/4}^\infty e^{-u} u^\frac{n}{2} \,\frac{d u}{u}.$$
Using the formula
$$4S H \partial^2_{y_i}G - 4S \partial_{y_i} (H \partial_{y_i}G ) + S \partial^2_{y_i}(H G)
=S \partial_{y_iy_i}H G -2 S \partial_{y_i}H \partial_{y_i}G + S H \partial_{y_iy_i}G,$$
together with \eqref{id}, we get 
$$\partial_{x_ix_i}u(t,x)=\lim_{\varepsilon\rightarrow0} \int\int_{\Omega_\varepsilon}\Big(S\partial_{y_iy_i}HG-2S
\partial_{y_i}H \partial_{y_i}G + S H \partial_{y_iy_i}G\Big)f(t-\tau,x-y)\,dy\,d\tau-A_nf(t,x),$$
with $A_n $ as in \eqref{eq:AyB}. Observe  that in the case of
$\partial_{x_ix_j}u(t,x)$ some minor changes have to be done along the proof. For example,
$2S\partial_{y_i}H\partial_{y_i}G$ should be substituted  by $S\partial_{y_i}H\partial_{y_j}G+ S\partial_{y_j}H\partial_{y_i}G$.
On the other hand, the integral $I^\varepsilon_{312}$ is zero,
because of the presence of the term $\partial_{y_j}\big(\frac{y_i}{|y|}\big).$

Next, by following parallel arguments as before, we have 
$$\partial_t u(t,x)=-\lim_{\varepsilon \rightarrow 0}\iint_{\Omega_\varepsilon}S(y)H(\tau,y)
G(\tau,x,y)\partial_\tau f(t-\tau,x-y)\,dy\,d\tau.$$
Integration by parts gives
\begin{align*}
\iint_{\Omega_\varepsilon} S(y)H(\tau,y)&G(\tau,x,y)\partial_\tau f(t-\tau,x-y)\,dy\,d\tau \\
&=-\iint_{\Omega_\varepsilon}\partial_\tau( S(y)H(\tau,y) G(\tau,x,y)) f(t-\tau,x-y)\,dy\,d\tau\\
&\quad+\iint_{\partial\Omega_\varepsilon} S(y)H(\tau,y)G(\tau,x,y)f(t-\tau,x-y) \nu_\tau\,d\sigma(y,\tau).
\end{align*}
As a normal to $\partial\Omega_\varepsilon^2$ is given by  $\frac{1}{\varepsilon}(0,y)$ and,
on the other hand, $\lim_{\tau \rightarrow0}S(\tau)H(\tau,y)G(\tau,x,y)= 0$,  we have
$$\iint_{\partial\Omega_\varepsilon^2} SH Gf(t-\tau,x-y) \nu_\tau\,d\sigma(y,\tau)=0=
\iint_{\partial\Omega_\varepsilon^3} SH Gf(t-\tau,x-y)\nu_\tau\,d\sigma(y,\tau).$$
Finally, for the integral over $\partial\Omega_\varepsilon^1$,
\begin{align*}
&\iint_{\partial\Omega_\varepsilon^1}S(y)H(\tau,y) G(\tau,x,y)f(t-\tau,x-y) \nu_\tau\,d\sigma(y,\tau)\\
&=\iint_{\partial\Omega_\varepsilon^1}SH\big(G(\tau,x,y)F(t-\tau,x,y)-G(0,x,0)f(t,x)\big)d\sigma(y,\tau)+
\iint_{\partial\Omega_\varepsilon^1}S H G(0,x,0)f(t,x)d\sigma(y,\tau)\\
&=:I_{t1}+I_{t2}.
\end{align*}
Consider $\displaystyle g(\rho,x,z) = G(\rho,x,z)f(t-\tau,x-z)$. By the Mean Value Theorem we get
\begin{align*}
|I_{t1}|&=\bigg|\int_{|y| < \varepsilon}S(\varepsilon^2)H(y,\varepsilon^2)\big(g(\varepsilon^2,x,y)- g(0,x,0)\big) \,dy\bigg|
\le C \int_{|y| < \varepsilon} S(\varepsilon^2)H(\varepsilon^2,y) (|y|+\varepsilon^2) \,dy\\
&\le C \int_{|y| < \varepsilon} S(\varepsilon^2)H(\varepsilon^2,y) \varepsilon\,dy 
 \le C \varepsilon \int_0^1 u^n e^{-u}\,\frac{d u}{u } \rightarrow 0,~\hbox{as}~\varepsilon \rightarrow 0.
\end{align*}
Finally,  by using Lemma \ref{lema2} we get
$$I_{t2}= -f(t,x) \int_{|y| < \varepsilon} S(\varepsilon^2)H(\varepsilon^2,y) \,dy  = -f(t,x)\frac{1}{\Gamma( \frac{n}{2})}
\int_0^\frac{1}{4}e^{-u}u^\frac{n}{2}\,\frac{du}{u}.$$
In particular,
$$\partial_tu(t,x) = \lim_{\varepsilon \rightarrow 0}\iint_{\Omega_\varepsilon}
\partial_\tau\big(S(\tau)H(\tau,y)G(\tau,x,y)\big)f(t-\tau,x-y)\,dy\,d\tau+B_n f(t,x).$$
This finishes the proof of part A.

%%%%%%%%%%%%%%%%%%%%%%%%%%%%%%%%%%%%%%%%%%%%%%%%%%%%%%
\subsection{Proof of Theorem \ref{fundamental1} Part (B)}
%%%%%%%%%%%%%%%%%%%%%%%%%%%%%%%%%%%%%%%%%%%%%%%%%%%%%%

Now we  shall prove the weighted
$L^p$ results for the parabolic Hermite--Riesz transforms defined by the formulas \eqref{equation x}
and \eqref{equation t}. We shall denote them as
$$R_{ij}^{H}= \partial_{x_ix_j}(\partial_t+H)^{-1}\quad\hbox{and}\quad R_t^{H}=\partial_t(\partial_t+H)^{-1}.$$

As we saw in Definition \ref{CZ}, in order to apply the general theory of Calder\'on--Zygmund operators
we need the boundedness of the operator in some $L^p$ space.
We prove that these Riesz transforms are bounded in $L^2$. In our opinion this result is of independent interest.

\begin{thm}\label{L2} 
The parabolic Hermite--Riesz transforms
$R^{{H}}_{ij}$ and $R_t^{{H}}$ are bounded operators in $L^2(\mathbb{R}^{n+1}).$
\end{thm}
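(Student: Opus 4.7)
My plan is to exploit the spectral decomposition of $H$ in terms of Hermite functions together with Plancherel in the time variable. For $f\in L^2(\R^{n+1})$, expand
$$f(t,x)=\sum_\alpha f_\alpha(t)\,h_\alpha(x),\qquad f_\alpha(t)=\int_{\R^n} f(t,x)h_\alpha(x)\,dx,$$
so that Parseval gives $\|f\|_{L^2(\R^{n+1})}^2=\int_\R\sum_\alpha|f_\alpha(t)|^2\,dt$. Since $Hh_\alpha=(2|\alpha|+n)h_\alpha$, the representation \eqref{FHermite} shows that if $u=(\partial_t+H)^{-1}f$ then diagonalizing in the Hermite basis and taking the Fourier transform in $t$ yields
$$\widehat{u_\alpha}(\rho)=\frac{\widehat{f_\alpha}(\rho)}{i\rho+2|\alpha|+n}.$$

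For $R_t^H$, the operator $\partial_t$ acts as multiplication by $i\rho$ on the Fourier side, so the Fourier--Hermite multiplier of $R_t^H$ is $i\rho/(i\rho+2|\alpha|+n)$, which has modulus bounded by $1$. By Plancherel in $t$ and Parseval in the Hermite basis, $\|R_t^Hf\|_{L^2(\R^{n+1})}\le\|f\|_{L^2(\R^{n+1})}$ is immediate.

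For the second order operators $R_{ij}^H$, introduce the creation and annihilation operators $A_i:=\partial_{x_i}+x_i$ and $A_i^\ast:=-\partial_{x_i}+x_i$, so that $\partial_{x_i}=\tfrac12(A_i-A_i^\ast)$ and hence
$$\partial_{x_ix_j}=\tfrac14\bigl(A_iA_j-A_iA_j^\ast-A_i^\ast A_j+A_i^\ast A_j^\ast\bigr).$$
The standard ladder relations $A_ih_\alpha=\sqrt{2\alpha_i}\,h_{\alpha-e_i}$ and $A_i^\ast h_\alpha=\sqrt{2(\alpha_i+1)}\,h_{\alpha+e_i}$ imply that each of the four composition operators $T$ above sends $h_\alpha$ to a scalar multiple $c_{T,\alpha}h_{\beta_T(\alpha)}$ of a single Hermite function, with $|c_{T,\alpha}|\le C_n(|\alpha|+1)$. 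Applying $T(\partial_t+H)^{-1}$ to $f$ term by term, using $|i\rho+2|\alpha|+n|\ge 2|\alpha|+n$, and exploiting orthonormality of the reindexed Hermite functions, we obtain
$$\|T(\partial_t+H)^{-1}f\|_{L^2(\R^{n+1})}^2=\int_\R\sum_\alpha\frac{|c_{T,\alpha}|^2\,|\widehat{f_\alpha}(\rho)|^2}{\rho^2+(2|\alpha|+n)^2}\,d\rho\le C\|f\|_{L^2(\R^{n+1})}^2,$$
since $|c_{T,\alpha}|/(2|\alpha|+n)$ is bounded uniformly in $\alpha$. Summing the four contributions yields the desired $L^2$ boundedness of $R_{ij}^H$.

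The main obstacle is essentially bookkeeping: one has to verify the ladder identities for each of the four possible products of creation and annihilation operators, being careful about edge cases where $A_ih_\alpha=0$ (i.e., $\alpha_i=0$) and about the non-commutativity relation $[A_i,A_j^\ast]=2\delta_{ij}$ appearing inside $A_iA_j^\ast$. Once this is done, the crucial quantitative point is that each $|c_{T,\alpha}|$ grows at most linearly in $|\alpha|$, which is exactly matched by the linear growth of the spectral denominator $2|\alpha|+n$, producing a uniformly bounded Fourier--Hermite multiplier and thereby the $L^2$ estimate.
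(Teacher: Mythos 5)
Your proof is correct and rests on the same underlying framework as the paper's (Hermite expansion in $x$ plus Fourier transform in $t$, reducing everything to a bounded Fourier--Hermite multiplier); the treatment of $R_t^H$ is identical. The handling of $R_{ij}^H$ is where you diverge: the paper factors $R_{ij}^H = \bigl(\partial_{x_ix_j}H^{-1}\bigr)\circ\bigl(H(\partial_t+H)^{-1}\bigr)$, invokes the known $L^2(\R^n)$ boundedness of the classical second-order Hermite--Riesz transforms $(\partial_{x_i}\pm x_i)(\partial_{x_j}\pm x_j)H^{-1}$ for the first factor, and then only needs the trivial bound $(2|\alpha|+n)/|i\rho+2|\alpha|+n|\le 1$. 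You instead expand $\partial_{x_ix_j}$ in ladder operators and estimate the resulting coefficients $c_{T,\alpha}$ directly, showing they grow at most linearly in $|\alpha|$ so that $|c_{T,\alpha}|/(2|\alpha|+n)$ is uniformly bounded. Your route is more self-contained (it effectively reproves the classical fact the paper cites rather than quoting it) at the cost of the extra ladder-coefficient bookkeeping; the paper's is shorter but leans on an external result. One small point you should make explicit: when applying Parseval for the reindexed family $\{h_{\beta_T(\alpha)}\}_\alpha$, you need not just orthonormality of the Hermite system but injectivity of $\alpha\mapsto\beta_T(\alpha)$ on the set where $c_{T,\alpha}\neq 0$; this does hold for each of the four products (they are shifts by $\pm e_i\pm e_j$), but it is worth stating, since it is what guarantees there is no overlap in the sum.
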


\begin{proof} We shall use the basis given by the Hermite functions, $h_\alpha.$ 
Consider the collection of functions
$$\mathcal{A} =\bigg\{\sum_{\alpha \in \mathcal{J}} \varphi_\alpha(t)h_\alpha(x):\mathcal{J}\subset\N_0^n~\hbox{(finite)},
~\widehat{\varphi_\alpha}\in C^\infty_c(\R)\bigg\}.$$
By using \eqref{FHermite},
\begin{equation*}
\begin{aligned}
(\partial_t+H)^{-1}\bigg[\sum_{\alpha \in \mathcal{J}} \varphi_\alpha(t)h_\alpha(x)\bigg] 
&= \int_0^\infty \sum_{\alpha \in \mathcal{J}} e^{-\tau \partial_t}\varphi_\alpha(t) e^{-\tau H} h_\alpha(x)\, d\tau\\
&= \sum_{\alpha \in \mathcal{J}} \int_0^\infty \varphi_\alpha(t-\tau) e^{-\tau(2|\alpha|+n)}h_\alpha(x)\,d\tau.
\end{aligned}
\end{equation*}
For the operator $R_t^{{H}}$ we have
\begin{align*}
\Big\|\partial_t (\partial_t+H_x)^{-1}& \sum_{\alpha \in \mathcal{J}} \varphi_\alpha(t)h_\alpha(x) \Big\|^2_{L^2(\R^{n+1})} =\Big\|  \sum_{\alpha \in \mathcal{J}} \xi \widehat{\varphi_\alpha}(\xi)(i\xi+2|\alpha|+n)^{-1}h_\alpha(x) \Big\|^2_{L^2(\R^{n+1})}\\ 
&\le C   \sum_{\alpha \in \mathcal{J}} \|\widehat{\varphi_\alpha}(\xi)\|^2_{L^2(\R^{n+1})}
=  C\Big\|\sum_{\alpha \in \mathcal{J}} \varphi_\alpha(t)h_\alpha(x)\Big\|^2_{L^2(\R^{n+1})}.
\end{align*}
It is well known that the second order Hermite-Riesz operators type, we mean all kind of combinations $(\partial_{x_i}\pm x_i)(\partial_{x_j}\pm x_j) H^{-1}$, are bounded in $L^2(\mathbb{R}^n)$. Hence in order to prove  $R^{H}_{ij}$ are bounded in $L^2(\mathbb{R}^{n+1}) $ it is enough to prove that $H  (\partial_t+H_x)^{-1}$ is bounded.  For that purpose we have
\begin{align*}
\Big\|H(\partial_t+H_x)^{-1}& \sum_{\alpha \in \mathcal{J}} \varphi_\alpha(t)h_\alpha(x) \Big\|^2_{L^2(\R^{n+1})} =\Big\|  \sum_{\alpha \in \mathcal{J}} \widehat{\varphi_\alpha}(\xi)(2|\alpha|+n)(i\xi+2|\alpha|+n)^{-1}h_\alpha(x) \Big\|^2_{L^2(\R^{n+1})}\\ 
&\le C   \sum_{\alpha \in \mathcal{J}} \|\widehat{\varphi_\alpha}(\xi)\|^2_{L^2(\R^{n+1})}
=  C\Big\|\sum_{\alpha \in \mathcal{J}} \varphi_\alpha(t)h_\alpha(x)\Big\|^2_{L^2(\R^{n+1})}.
\end{align*}
\end{proof}

To compute the kernels, we perform the change of variables $x-y\longmapsto y$ in \eqref{equation x} and \eqref{equation t} to get 
\begin{multline*}
R_{ij}^{H}f(t,x)\\=\lim_{\varepsilon\rightarrow0}\iint_{\Omega_\varepsilon(x)}\Big(S\partial_{y_iy_j}HG
-S \partial_{y_i}H \partial_{y_j}G -S \partial_{y_j}H \partial_{y_i}G+ S H \partial_{y_iy_j}G\Big) f( t-\tau,y )\,dy\,d\tau-A_nf(t,x),
\end{multline*}
 and 
$$R_t^{H}f(t,x)=\lim_{\varepsilon \rightarrow 0}\iint_{\Omega_\varepsilon(x)}\partial_\tau(SHG)f(t-\tau,y)\,dy\,d\tau+B_n f(t,x).$$
The functions appearing in the integrand are $S(\tau)$ as before and
$$H(\tau,x-y)= \exp\big(-\tfrac14|x-y|^2\coth\tau\big),\quad
G(\tau,x,x-y) = \exp\big(-\tfrac14|x+y|^2 \tanh \tau\big).$$
Let us see that the kernels of the operators above satisfy the standard Calder\'{o}n--Zygmund estimates.
On the way we need some easy estimates that we present here for future reference.
 
\begin{rem}\label{seno 2}
Let $\tau >0$.
\begin{itemize}
\item[(a)] If $\tau < 1$ then $\sinh\tau\sim \tau,~\cosh\tau\sim C,~\coth\tau\sim \frac{1}{\tau},~\hbox{and}~\tanh\tau\sim\tau$.
\item[(b)] If $\tau >1$ then $\sinh\tau \sim e^\tau,~\cosh\tau\sim e^\tau,~\coth\tau\sim C,~\hbox{and}~\tanh\tau \sim C$.
\end{itemize}
\end{rem}

It is easy to check that
\begin{equation}\label{compute H}
\begin{aligned}
|\partial_{y_i}H| &\leq  C  (\coth\tau)^\frac{1}{2} e^{-\frac{|x-y|^2}{4}\coth\tau}, \quad \quad 
|\partial_{y_iy_j}H | \leq  C \coth\tau e^{-\frac{|x-y|^2}{4}\coth\tau},\\
|\partial_{y_iy_jy_k}H| &\leq C  (\coth\tau)^\frac{3}{2} e^{-\frac{|x-y|^2}{4}\coth\tau}.
\end{aligned}
\end{equation}
Also 
\begin{equation}\label{compute G}
\begin{aligned}
|\partial_{y_i}G| &\leq C  (\tanh\tau)^\frac{1}{2} e^{-\frac{|x+y|^2}{4}\tanh\tau},\quad\quad 
|\partial_{y_iy_j}G | \leq  C \tanh\tau e^{-\frac{|x+y|^2}{4}\tanh\tau}.\\
|\partial_{y_iy_jy_k}G | &\leq C  (\tanh\tau)^\frac{3}{2} e^{-\frac{|x+y|^2}{4}\tanh\tau}.
\end{aligned}
\end{equation}
If $\tau <1$ then by using \eqref{compute H}, \eqref{compute G} and Remark \ref{seno 2},
\begin{equation}\label{size 11}
|S\partial_{y_iy_j} HG| \le \frac{C}{\tau^{\frac{n}{2}+1}} e^{-\frac{|x-y|^2}{c\tau}}
 \leq \frac{C}{(|x-y|+\tau^{1/2})^{n+2}}.
 \end{equation}
On the other hand, when $\tau>1$,
\begin{equation}\label{size 12}
|S\partial_{y_iy_j} HG| \le Ce^{-C(\tau+|x-y|^2)}\leq \frac{C}{(|x-y|+\tau^{1/2})^{n+2}}.
\end{equation}
Analogously,
\begin{equation}\label{size 2}
|S \partial_{y_i}H \partial_{y_j}G| \leq \frac{C}{(\sinh{2\tau})^\frac{n}{2}} e^{-\frac{|x-y|^2}{4}\coth\tau}.
\end{equation}
For  $\tau < 1$,  as  $\frac{1}{\tau}>1$,  
$$|S \partial_{y_i}H \partial_{y_j}G|\leq \frac{C}{\tau^\frac{n}{2}} e^{-\frac{|x-y|^2}{4\tau}}
   \leq\frac{C}{\tau^{\frac{n}{2}+1}} e^{-\frac{|x-y|^2}{4\tau}}.$$
If $\tau > 1$,  we have 
$$|S \partial_{y_i}H \partial_{y_j}G| \leq C e^{-n\tau}e^{-\frac{|x-y|^2}{C}}.$$
By a parallel  argument to the one in the proof of \eqref{size 11} and \eqref{size 12}
we get that \eqref{size 2} is controlled by
$$\frac{C}{(|x-y|+\tau^\frac12)^{n+2}}.$$
By the estimates \eqref{compute G} and parallel arguments to the previous cases we get 
\begin{equation}\label{size 3}
| S H\partial_{y_iy_j}G | \leq C \frac{\tanh \tau}{(\sinh{2\tau})^\frac{n}{2}} e^{-\frac{|x-y|^2}{4}\coth\tau} 
\le\frac{C}{(|x-y|+\tau^\frac12)^{n+2}}.
\end{equation}
Pasting together  \eqref{size 12}, \eqref{size 2} and \eqref{size 3}  we get
$$|R^H_{ij}(\tau,x,y)|\le\frac{C}{(|x-y|+\tau^\frac12)^{n+2}}.$$
Analogously, it can be proved that 
$$|\nabla_y R^H_{ij}(\tau,x,y)|+|\nabla_xR^H_{ii,x}(\tau,x,y)|\le\frac{C}{(|x-y|+\tau^\frac12)^{n+3}},$$
and 
$$|\partial_\tau R^H_{ij} (\tau,x,y) | \le \frac{C}{(|x-y|+\tau^{1/2})^{n+4}}.$$
Observe that if $4(|s|^{1/2}+ |y-y_0|)  \le |\tau-s|^{1/2} +|x-y_0|,$ then 
$|\tau-s|^{1/2} +|x-y_0|\sim |\tau|^{1/2} +|x-y_0|$ and any intermediate point 
$(\tau-\theta s,x,y_\theta)$ between $(\tau,x,y)$ and $(\tau-s,x,y_0)$ satisfies
$|\tau- \theta s|^{1/2} + |x-y_\theta| \le C(|\tau-  s|^{1/2} + |x-y_0|)$. Hence
\begin{align*}
|R^H_{ij}(\tau,x,y)- R^H_{ij}(\tau-s,x,y_0)| &\le | \partial_\tau R^H_{ij}(\tau-\theta s,x,y_\theta)|\, |s|+|\nabla_yR^H_{ij}(\tau-\theta s,x,y_\theta)| |y-y_0|\\
&\le \frac{C |s|}{(|\tau-\theta s|^{1/2}+ |x-y_\theta|)^{n+4}} + \frac{C |y-y_0|}{(|\tau-\theta s|^{1/2}+ |x-y_\theta|)^{n+3}} \\
&\le  \frac{C |s|^{1/2} ( |\tau-s|^{1/2}+ |x-y_0|)}{(|\tau-s|^{1/2}+ |x-y_0|)^{n+4}} + \frac{C |y-y_0|}{(| \tau-s|^{1/2}+ |x-y_0|)^{n+3}} \\
 &\le C\frac{|s|^{1/2}+  |y-y_0|}{(| \tau-s|^{1/2}+ |x-y_0|)^{n+3}}.
\end{align*} 
In other words the kernel $R^H_{ij}(\tau,x,y)$ satisfies the smoothness condition (II.2) of Definition \ref{CZ}.

Finally, we can apply the full strength of the theory of Calder\'on--Zygmund operators
that we presented at the beginning of Section \ref{pre},
arriving to the end of the proof of Theorem \ref{fundamental1} Part (B).

%%%%%%%%%%%%%%%%%%%%%%%%%%%%%%%%%%%%%%%%%%%%%%%%%%%%%%
\subsection{Proof of Theorem \ref{fundamental3}.}
%%%%%%%%%%%%%%%%%%%%%%%%%%%%%%%%%%%%%%%%%%%%%%%%%%%%%%

We just give a sketch of the proof.
As in the case of the heat equation, see Section \ref{pre}, it is enough to study the problem
$$\begin{cases}
\partial_tv=\Delta v-|x|^2v+f,&\hbox{for}~t >0,~x\in \mathbb{R}^n,\\
v(0,x)=0,&\hbox{for}~x\in\mathbb{R}^n.
\end{cases}$$
By a parallel argument to the one in the proof of Theorem \ref{fundamental 3} we get a classical solution 
for functions $f$ that are $C^2$ and have compact support.
The weighted $L^p$ boundedness and the weighted
weak (1,1) estimate can be obtained by comparison with the case of the whole space.
The details are left to the interested reader.

%%%%%%%%%%%%%%%%%%%%%%%%%%%%%%%%%%%%%%%%%%%%%%%%%%%%%%
\section{Weighted mixed-norm estimates: proof of Theorems \ref{mixriesz} and \ref{Poisson}}\label{mix}
%%%%%%%%%%%%%%%%%%%%%%%%%%%%%%%%%%%%%%%%%%%%%%%%%%%%%%

Let us begin with the proof of Theorem \ref{Poisson}. We will show
how the ideas work for the Poisson operators first and then we will
show Theorem \ref{mixriesz}.

\begin{rem}\label{poisson}
We want to observe that formulas \eqref{eq:Poisson operator Laplaciano}
and \eqref{eq:Poisson operator Hermite} are in principle well defined for $u$ in $L^2(\R^{n+1})$.
To this end, we first notice that the following integral
$$\frac{y^{2s}}{4^s\Gamma(s)}\int_0^\infty e^{-y^2/(4\tau)}e^{-\tau(i\rho+\lambda)}\,\frac{d\tau}{\tau^{1+s}},
\quad\rho\in\R,~\lambda\ge0,~0<s<1.$$
is convergent, as the Cauchy Integral Theorem and analytic continuation of the formula with $\rho=0$ show. 
Notice that these integrals are related to Bessel functions, see \cite{Lebedev}.
Then \eqref{eq:Poisson operator Laplaciano} and \eqref{eq:Poisson operator Hermite} are well defined
by using Fourier transform and Hermite expansions, respectively. 
\end{rem}

%%%%%%%%%%%%%%%%%%%%%%%%%%%%%%%%%%%%%%%%%%%%%%%%%%%%%%
\subsection{Proof of Theorem \ref{Poisson} when $q=p$}
%%%%%%%%%%%%%%%%%%%%%%%%%%%%%%%%%%%%%%%%%%%%%%%%%%%%%%

We start with the case $q=p$.

\begin{rem}\label{product}
If a weight  $\nu=\nu(t)$ belongs to the class $A_p(\R)$, $1\leq p<\infty$,
it also satisfies the definition of Muckenhoupt condition by changing the distance $|t-s|$
by the distance $|t-s|^{1/2}$. As a consequence, if $\nu=\nu(t)\in A_p(\R)$
and $\omega=\omega(x)\in A_p(\R^n)$ then
the tensor product weight $w(t,x):=\nu(t)\omega(x)\in A_p^\ast(\R^{n+1})$, for any $1\leq p<\infty$.
\end{rem}

From Remark \ref{product} we notice that
our next result is slightly more general than Theorem \ref{Poisson} when $p=q>1$.

\begin{thm}\label{Poisson1}
Let $u\in L^{p}(\R^{n+1},w)$, for $w\in A_p^*(\R^{n+1})$, $1\leq p\leq\infty$. Let $\mathcal{P}^{s,\ast}u(t,x)$
be as in Theorem \ref{Poisson}. If $1<p\leq\infty$ then
$$\|\mathcal{P}^{s,\ast}u\|_{L^p(\mathbb{R}^{n+1},w)}\le
C_{n,p,w}\|u\|_{L^p(\mathbb{R}^{n+1},w)}.$$
If $p=1$ then, for every $\lambda>0$,
$$w\big(\{(t,x)\in\R^{n+1}:|\mathcal{P}^{s,\ast}u(t,x)|>\lambda\}\big)
\le\frac{C_{n,w}}{\lambda}\|u\|_{L^1(\R^{n+1},w)}.$$
\end{thm}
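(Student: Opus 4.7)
The plan is to recognize $\mathcal{P}^{s,\ast}$ as the maximal function of a Banach-valued Calderón--Zygmund operator on the parabolic space of homogeneous type $(\R^{n+1},d,dt\,dx)$, take $E=\mathbb{C}$ and $F=L^\infty((0,\infty),dy)$, and apply the vector-valued CZ theorem stated in Definition \ref{CZ} and the paragraph following it. Concretely, I view the linear map $Tu(t,x)=\bigl(\mathcal{P}^s_y u(t,x)\bigr)_{y>0}$ as an $F$-valued operator, so that $\|Tu(t,x)\|_F=\mathcal{P}^{s,\ast}u(t,x)$. The route has three steps: initial endpoint boundedness at $p_0=\infty$, verification of the standard kernel estimates, and invocation of CZ.

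\textbf{Step 1 (endpoint $p_0=\infty$).} For the Laplacian case one has $e^{-\tau(\partial_t-\Delta)}1=1$ and, by the subordination identity, $\frac{y^{2s}}{4^s\Gamma(s)}\int_0^\infty e^{-y^2/(4\tau)}\tau^{-1-s}d\tau=1$, so $|P^s_{\Delta,y}u(t,x)|\le\|u\|_\infty$ uniformly in $y>0$. For the Hermite case, a direct calculation completing the square in the Mehler kernel \eqref{Hheat} yields
\[
\int_{\R^n}\mathcal{W}_\tau(x,z)\,dz=(\cosh 2\tau)^{-n/2}\exp\bigl(-\tfrac12|x|^2\tanh 2\tau\bigr)\le 1,
\]
for every $\tau>0$ and $x\in\R^n$, and the same subordination step gives $|P^s_{H,y}u(t,x)|\le\|u\|_\infty$. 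In both cases, taking $\sup_{y>0}$ then $\sup_{(t,x)}$ delivers $\|Tu\|_{L^\infty_F(\R^{n+1})}\le\|u\|_{L^\infty(\R^{n+1})}$, so condition (I) of Definition \ref{CZ} holds with $p_0=\infty$; this also proves the strong bound at $p=\infty$ in the theorem.

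\textbf{Step 2 (kernel estimates in $L^\infty_y$).} After the change of variables $\sigma=t-\tau$, the operator representation reads
\[
\mathcal{P}^s_y u(t,x)=\int_{\R^{n+1}}k_y\bigl((t,x);(\sigma,z)\bigr)\,u(\sigma,z)\,d\sigma\,dz,\qquad(t,x)\notin\supp u,
\]
with
\[
k_y\bigl((t,x);(\sigma,z)\bigr)=\frac{y^{2s}}{4^s\Gamma(s)}\,\frac{e^{-y^2/(4(t-\sigma))}}{(t-\sigma)^{1+s}}\,\mathcal{K}_{t-\sigma}(x,z)\,\chi_{\{t>\sigma\}},
\]
where $\mathcal{K}_\tau(x,z)$ is either the Gauss--Weierstrass kernel $W(\tau,x-z)$ or the Mehler kernel $\mathcal{W}_\tau(x,z)$. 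The elementary estimate $\sup_{y>0}y^{2s}e^{-y^2/(4\tau)}=(4s\tau)^s e^{-s}$ absorbs the factor $\tau^{-s}$, leaving
\[
\bigl\|k_\cdot\bigl((t,x);(\sigma,z)\bigr)\bigr\|_{L^\infty_y}\le \frac{C_s\,\mathcal{K}_{t-\sigma}(x,z)}{(t-\sigma)}\,\chi_{\{t>\sigma\}}.
\]
The size bound \eqref{first} then follows exactly as for the heat kernel in Subsection \ref{subsection:1}; for the Mehler kernel one splits $\tau<1$ and $\tau>1$ as in Remark \ref{seno 2} and uses the bounds \eqref{size 11}--\eqref{size 12} (without the extra $\tau^{-1}$ there is still one power to spare). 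The smoothness bound \eqref{second} is obtained in the same way by differentiating $k_y$ once in $\sigma$ or $z$; each derivative brings down at most a factor $1/(t-\sigma)^{1/2}$ or $1/(t-\sigma)$ on the Gaussian part (equivalently, on the Mehler envelope via \eqref{compute H}--\eqref{compute G}), yielding the extra $(|t-\sigma|^{1/2}+|x-z|)^{-1}$ factor demanded by \eqref{second}. The sup in $y$ is taken under the absolute values throughout, so the bounds hold in the operator norm $\mathcal{L}(\mathbb{C},L^\infty_y)$.

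\textbf{Step 3 (apply CZ).} Steps 1 and 2 put us precisely in the setting of the vector-valued Calderón--Zygmund theorem on $(\R^{n+1},d,dt\,dx)$ with $p_0=\infty$. The theorem therefore yields strong boundedness on $L^p(\R^{n+1},w)$ for every $1<p<\infty$ and $w\in A_p^\ast(\R^{n+1})$, and a weak-type $(1,1)$ bound for $w\in A_1^\ast(\R^{n+1})$. Since $\|Tu(t,x)\|_F=\mathcal{P}^{s,\ast}u(t,x)$, this is exactly Theorem \ref{Poisson1}.

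\textbf{Main obstacle.} The delicate point is Step 2 for the Hermite case, because $\mathcal{W}_\tau(x,z)$ is not translation invariant and one has to control both the short-time Gaussian-like regime and the long-time exponentially decaying regime uniformly in $y$, while also showing that differentiation in $\sigma$ does not destroy the sup-in-$y$ bound. Fortunately the required pointwise estimates on the Mehler kernel and its first $\tau$- and $z$-derivatives are already the ones used in Section \ref{pf1} to verify \eqref{first}--\eqref{second} for the parabolic Hermite--Riesz transforms, so no new estimate is needed: one only has to check that the subordination factor $y^{2s}e^{-y^2/(4\tau)}\tau^{-1-s}$, after $\sup_{y>0}$, behaves like $\tau^{-1}$ and is therefore strictly better than the $\tau^{-n/2-1}$ arising from $\partial_{y_iy_j}\mathcal{W}_\tau$ in Section \ref{pf1}.
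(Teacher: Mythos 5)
Your proof follows essentially the same route as the paper's: identify $\{\mathcal{P}^s_y u\}_{y>0}$ with an $L^\infty_y$-valued Calder\'on--Zygmund operator on the parabolic space of homogeneous type, establish the $L^\infty\to L^\infty$ endpoint ($p_0=\infty$) via total mass $\le 1$ of the subordinated kernel, verify the size and gradient kernel estimates \eqref{first}--\eqref{second} using the Gaussian envelope of the Gauss--Weierstrass and Mehler kernels, and invoke the vector-valued CZ theorem. The one place you add detail that the paper only gestures at is the explicit identity $\int_{\R^n}\mathcal{W}_\tau(x,z)\,dz=(\cosh 2\tau)^{-n/2}\exp(-\tfrac12|x|^2\tanh 2\tau)\le 1$, which makes the $L^\infty$ endpoint for the Hermite case transparent; your use of $\sup_{y>0}y^{2s}e^{-y^2/(4\tau)}=(4s\tau)^se^{-s}$ to absorb the subordination factor into a power of $\tau$ is the same computation the paper's bounds implicitly rely on. This is a correct proof and the same approach.
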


\begin{proof} 
We first show the computation for the case of the heat equation
\eqref{eq:Poisson operator Laplaciano}, that is, when $\mathcal{P}^{s,\ast}u(t,x)=\sup_{y>0}|P^s_{\Delta,y}u(t,x)|$.
For functions  $u\in L^p(\mathbb{R}^{n+1})$ we have 
$$P_{\Delta,y}^su(t,x)
= \frac{y^{2s}}{4^s\Gamma(s)}\int_0^\infty e^{-y^2/(4\tau)}\int_{\mathbb{R}^n} 
\frac{ e^{- \frac{|z|^2}{4\tau}}}{(4\pi\tau)^{n/2}} u(t-\tau,x-z)\,dz\,\frac{d\tau}{\tau^{1+s}}.$$
The operator above can be regarded as the convolution in $\mathbb{R}^{n+1}$ of $u$ with the $L^1(\mathbb{R}^{n+1})$ function 
\begin{equation}\label{2}
P^s_{\Delta,y}(t,x)=\frac{y^{2s}}{4^s\Gamma(s)}\frac{e^{-y^2/(4t)}}{t^{1+s}}  \frac{ e^{- \frac{|x|^2}{4t}}}{(4\pi t)^{n/2}}\chi_{t>0}.  
\end{equation}
Hence, for each $y>0$, the operator $P_{\Delta,y}^s$ maps $L^p(\mathbb{R}^{n+1}) $ into itself.
In order to show the boundedness of the maximal operator
we shall apply the theory of vector-valued Calder\'on--Zygmund operators.
We begin by observing that
\begin{equation}\label{1}
|P_{\Delta,y}^su(t,x) |\le\|u\|_{L^\infty(\R^{n+1})}\frac{y^{2s}}{4^s\Gamma(s)}\int_0^\infty
e^{-y^2/(4\tau)}\,\frac{d\tau}{\tau^{1+s}}= \| u\|_{L^\infty(\R^{n+1})}.
\end{equation} 
Consider then the $L^\infty(0,\infty)$-valued operator given by
\begin{equation}\label{PES}
\mathrm{P}_{\Delta}^su(t,x):=\Big\{P_{\Delta,y}^su(t,x)\Big\}_{y>0}.
\end{equation}
Then \eqref{1} shows that $\mathrm{P}^s_\Delta$ is a bounded operator from $L^\infty(\mathbb{R}^{n+1})$
into  $L^\infty(\R^{n+1};L^\infty(0,\infty))$.
Moreover it has a representation as a convolution with a vector-valued kernel
$$\mathrm{P}_\Delta^su(t,x)=\int_{\R^{n+1}}\big\{P_{\Delta,y}^s(\tau,z)\big\}_yu(t-\tau,x-z)\,dz\,d\tau,$$
where $P_{\Delta,y}^s(t,x)$ is given in \eqref{2}.
It is easy to check that
$$|P^s_{\Delta,y}(t,x)|\le \frac{C}{ (|t|^{1/2} +|x|)^{n+2}},$$
and
$$|\partial_tP^s_{\Delta,y}(t,x)|\le \frac{C}{(|t|^{1/2}+|x|)^{n+4}},
\quad\hbox{and}\quad|\nabla P^s_{\Delta,y}(t,x)| \le \frac{C}{(|t|^{1/2}+|x|)^{n+3}}.$$
Therefore $\big\{P^s_{\Delta,y}(t,x)\big\}_{y>0}$ is a vector-valued
Calder\'on--Zygmund kernel as described in Definition \ref{CZ}.
Thus the operator $\mathrm{P}_\Delta^s$ is bounded from $L^p(\mathbb{R}^{n+1},w)$ 
into  $L^p(\mathbb{R}^{n+1},w;L^\infty(0,\infty))$ for $w\in A_p^*(\R^{n+1})$
and it satisfies the corresponding weak-type estimate,
namely, if $p=1$ then, for any $\lambda>0$,
$$w\big(\{(t,x)\in\R^{n+1}:\|\mathrm{P}_\Delta^su(t,x)\|_{L^\infty(0,\infty)}>\lambda\}\big)\leq
\frac{C_n}{\lambda}\|u\|_{L^1(\R^{n+1},w)}, \quad\hbox{for}~w\in A_1^*(\R^{n+1}).$$
The statement then follows by observing that
$\mathcal{P}^{s,*}_\Delta u(t,x)=\|\mathrm{P}^s_\Delta u(t,x)\|_{L^\infty(0,\infty)}$.

Next let us consider the case of the harmonic oscillator evolution equation \eqref{eq:Poisson operator Hermite}, namely,
when $\mathcal{P}^{s,\ast}u(t,x)=\sup_{y>0}|P^s_{H,y}u(t,x)|$.
By using Remark \ref{seno 2}, see also \cite{Stempak-Torrea,Thangavelu},
it is easy to check that the Mehler kernel $\mathcal{W}_t(x,z)$
given in (\ref{Hheat}) and (\ref{fundamental 2}) satisfies 
$$\mathcal{W}_t(x,y)\le C\frac{e^{-|x-y|^2/(ct)}}{t^{n/2}},$$
where $C$ and $c$ are positive constants. In a similar way, its derivatives can be estimated
by the same bounds for the Gauss--Weierstrass kernel. Then we can proceed exactly as in the
proof for the case of $P^s_{\Delta,y}$ and conclude.
\end{proof}

%%%%%%%%%%%%%%%%%%%%%%%%%%%%%%%%%%%%%%%%%%%%%%%%%%%%%%
\subsection{Proof of Theorem \ref{Poisson}}
%%%%%%%%%%%%%%%%%%%%%%%%%%%%%%%%%%%%%%%%%%%%%%%%%%%%%%

The kernels of both operators $P^s_{\Delta,y}$ and $P^s_{H,y}$ are bounded by
$$K^s_{y}(t,x)=Cy^{2s}\frac{e^{-y^2/(ct)}}{t^{1+s}} \frac{e^{-|x|^2/(ct)}}{t^{n/2}}\chi_{t>0}.$$
Moreover 
\begin{equation}\label{operatornorm}
\int_{\R^n}K^s_y(t,x)\,dx \le Cy^{2s} \frac{e^{-y^2/(ct)}}{t^{1+s}}\chi_{t>0}.
\end{equation}
We show the case of the heat
equation, the other one follows the same lines.
Let us fix $1<p<\infty$.
By using Theorem \ref{Poisson1} and Remark \ref{product} we have
$$\|\mathcal{P}^{s,*}u\|_{L^p(\R,\nu;L^p(\R^n),\omega)}\le C\|u\|_{L^p(\R,\nu;L^p(\R^n,\omega))}.$$
This estimate with $\nu=1$ implies in particular the boundedness
$$\mathrm{P}^s_\Delta: L^p(\mathbb{R};L^p(\mathbb{R}^n,\omega))\longrightarrow L^p(\mathbb{R};
(L^p(\mathbb{R}^n,\omega;L^\infty(0,\infty)))),$$
where $\mathrm{P}^s_\Delta$ is given in the proof of Theorem \ref{Poisson1} by \eqref{PES}.
The kernel of $\mathrm{P}^s_\Delta$ is given by
\begin{eqnarray} \label{CZP}
\{P_{\Delta,y}^s(t,\cdot) \}_y&:& L^p(\mathbb{R}^n,\omega) \longrightarrow L^p(\mathbb{R}^n,\omega;L^\infty(0,\infty))   \\
   & & \quad  \varphi \quad  \,\,\, \longrightarrow \{ P_{\Delta,y}^s(t,\cdot)\ast\varphi(x)\}_y. \nonumber
\end{eqnarray}
In the case  $\omega =1$, Young's inequality and \eqref{operatornorm} guarantee that the norm of the operator above
is bounded by $Ct^{-1}$. Also in this case it is easy to see that the kernel 
$ \{\partial_tP_{\Delta,y}^s(t,\cdot) \}_y$ has norm 
bounded by $Ct^{-2}$.  On the other hand, for each fixed $t$, the $L^\infty(0,\infty)$-norm of the kernel \eqref{CZP}
is bounded by $Ct^{-1}\frac{e^{-|x|^2/(ct)}}{t^{n/2}} \chi_{t>0}\le Ct^{-1}/|x|^n$, while its gradient 
with respect to $x$ is bounded by $Ct^{-1}/|x|^{n+1}$. Then the Calder\'on--Zygmund theory
gives that the operator norm of \eqref{CZP} is bounded by $C_1t^{-1}$ and,
by a parallel argument, the operator norm of $ \{\partial_tP_{\Delta,y}^s(t,\cdot) \}_y$ is bounded by
$C_2t^{-2}$, where the constants $C_1$ and $C_2$ depend on $\omega$.
These estimates ensure that the kernel satisfies the
standard estimates of Calder\'on--Zygmund kernels on the real line. Therefore we get the boundedness
$$\mathrm{P}_\Delta^s:L^q(\mathbb{R},\nu;L^p(\mathbb{R}^n,\omega))\rightarrow
L^q(\mathbb{R},\nu;L^p(\mathbb{R}^n,\omega;L^\infty(0,\infty))),\quad1<q<\infty,$$
and the corresponding weak type estimate when $q=1$.
The relation $\mathcal{P}^{s,*}u(t,x)=\|\mathrm{P}^s_\Delta u(t,x)\|_{L^\infty(0,\infty)}$
concludes the proof.

%%%%%%%%%%%%%%%%%%%%%%%%%%%%%%%%%%%%%%%%%%%%%%%%%%%%%%
\subsection{Proof of Theorem \ref{mixriesz}}
%%%%%%%%%%%%%%%%%%%%%%%%%%%%%%%%%%%%%%%%%%%%%%%%%%%%%%

We have already proved that each of the operators $\mathrm{R}=R^\Delta_{ij},R^\Delta_t,R^H_{ij},R^H_t$
are bounded in $L^p(\mathbb{R}^{n+1},w)$, $1<p<\infty$, $w\in A_p^\ast(\R^{n+1})$,
and satisfy the corresponding weak-type estimate when $p=1$ and $w\in A_1^\ast(\R^{n+1})$. Hence, for any $1<p<\infty$,  
$$\mathrm{R}:L^p(\R,\nu;L^p(\R^n,\omega))\longrightarrow
L^p(\R,\nu;L^p(\R^n,\omega)),\quad\hbox{for}~\nu\in A_p(\R),~\omega \in A_p(\R^{n}),$$ 
see Remark \ref{product}.
By following carefully the computations we made in Sections \ref{pre} and \ref{pf1}, it can be readily seen
that the kernels of these operators are bounded by $Ct^{-n/2-1}e^{-|x-y|^2/(ct)}$.
Therefore the arguments presented above for the Poisson kernels
remain valid in these cases and we get the mixed norm estimates.
Further details are left to the interested reader.

\medskip

\noindent\textbf{Acknowledgements.} We are grateful to Prof. Sundaram Thangavelu for 
enlightening conversations about the results of this paper.
In particular, the proof of Theorem \ref{L2} presented here was suggested by him.

The first author was supported by grants 11471251 and 11271293 from National
Natural Science Foundation of China. The second and third authors 
were supported by grant MTM2015-66157-C2-1-P from Government of Spain.
The results in this work are part of the first author's PhD Thesis.

%%%%%%%%%%%%%%%%%%%%%%%%%%%%%%%%%%%%%%%%%%%%%%%%%%%%%%

%%%%%%%%%%%%%%%%%%%%%%%%%%%%%%%%%%%%%%%%%%%%%%%%%%%%%%

%%%%%%%%%%%%%%%%%%%%%%%%%%%%%%%%%%%%%%%%%%%%%%%%%%%%%%

\begin{thebibliography}{10}
%%%%%%%%%%%%%%%%%%%%%%%%%%%%%%%%%%%%%%%%%%%%%%%%%%%%%%

\bibitem{Bernardis} A. Bernardis, F. J. Mart\'in-Reyes, P. R. Stinga and  J. L. Torrea,
{Maximum principles, extension problem and inversion for nonlocal one-sided equations},
\textit{J. Differential Equations}
\textbf{260} (2016), 6333--6362.

\bibitem{Caffarelli-Stinga} L. A. Caffarelli and P. R. Stinga,
{Fractional elliptic equations, Caccioppoli estimates and regularity},
\textit{Ann. Inst. H. Poincar\'e Anal. Non Lin\'eaire}
\textbf{33} (2016), 767--807.

\bibitem{Calderon-Zygmund} A. P. Calder\'on and A. Zygmund,
{On the existence of certain singular integrals},
\textit{Acta Math.}
\textbf{88} (1952), 85--139.

\bibitem{Calderon-Zygmund 2} A. P. Calder\'on and A. Zygmund,
{Singular integral operators and differential equations},
\textit{Amer. J. Math.}
\textbf{79} (1957), 901-921.

\bibitem{Duo} J. Duoandikoetxea,
\textit{Fourier Analysis},
translated and revised from the 1995 Spanish original by David Cruz-Uribe,
Graduate Studies in Mathematics \textbf{29},
Amer. Math. Soc., Providence, RI, 2001.

\bibitem{Fabes} E. B. Fabes,
{Singular integrals and partial differential equations of parabolic type},
\textit{Studia Math.}
\textbf{28} (1966), 81--131.

\bibitem{Fabes-Sadosky} E. B. Fabes and C. Sadosky,
{Pointwise convergence for parabolic singular integrals},
\textit{Studia Math.}
\textbf{26} (1966), 225--232.

\bibitem{Gale-Miana-Stinga} J. E. Gal\'e, P. J. Miana, and P. R. Stinga,
{Extension problem and fractional operators: semigroups and wave equations},
\textit{J. Evol. Equ.}
\textbf{13} (2013), 343--368.

\bibitem{Hieber} R. Haller-Dintelmann, H. Heck and M. Hieber,
{$L^p-L^q$ estimates for parabolic systems in non-divergence form with VMO coefficients},
\textit{J. London Math. Soc. (2)}
\textbf{74} (2006), 717--736.

\bibitem{Jones} B. F. Jones, Jr.
{A class of singular integrals},
\textit{Amer. J. Math.}
\textbf{86} (1964), 441--462.

\bibitem{Kemppainen-Sjogren-Torrea} M. Kemppainen, P. Sj\"ogren and J. L. Torrea,
{Wave extension problem for the fractional Laplacian},
\textit{Discrete Contin. Dyn. Syst.}
\textbf{35} (2015), 4905--4929.

\bibitem{Krylov book} N. V. Krylov,
\textit{Lectures on Elliptic and Parabolic Equations in H\"older Spaces},
Graduate Studies in Mathematics \textbf{12},
American Mathematical Society,
Providence, R.I., 1996.

\bibitem{Krylov0} N. V. Krylov,
{The Calder\'on-Zygmund theorem and its applications to parabolic equations},
(Russian) \textit{Algebra i Analiz}
\textbf{13} (2001), 1--25;
translation in \textit{St. Petersburg Math. J.}
\textbf{13} (2002), 509--526.

\bibitem{Krylov} N. V. Krylov,
{The Calder\'on-Zygmund theorem and parabolic equations in $L_p(\R,C^{2+\alpha})$-spaces},
\textit{Ann. Scuola Norm. Sup. Pisa Cl. Sci (5)}
\textbf{I} (2002), 799--820.

\bibitem{Ames} G. M. Lieberman,
\textit{Second Order Parabolic Differential Equations},
World Scientific Publishing Co. Pte. Ltd., 1996.

\bibitem{Lebedev} N. N. Lebedev,
\textit{Special Functions and Their Applications},
Prentice-Hall, INC,
Englewood Cliffs, N.J., 1965.

\bibitem{MST} R. A. Mac\'ias, C. Segovia and J. L. Torrea,
{Singular integral operators with non-necessarily bounded kernels on spaces of homogeneous type},
\textit{Adv. Math.}
\textbf{93} (1992), 25--60.

\bibitem{RRT} J. L. Rubio de Francia, F. J. Ruiz and J. L. Torrea,
Calder\'on-Zygmund theory for operator-valued kernels,
\textit{Adv. in Math.}
\textbf{62} (1986), 7--48.

\bibitem{Francisco} F. J. Ruiz and J. L.Torrea,
{Parabolic differential equations and vector-valued Fourier analysis},
\textit{Colloq. Math.}
\textbf{58} (1989), 61--75.

\bibitem{RT} F. J. Ruiz and J. L. Torrea,
{Vector-valued Calder\'on-Zygmund theory and Carleson measures on spaces of homogeneous nature},
\textit{Studia Math.}
\textbf{88} (1988), 221--243.

\bibitem{Stein} E. M. Stein,
\textit{Topics in Harmonic Analysis Related to the Littlewood-Paley Theory},
Annals of Mathematics Studies \textbf{63},
Princeton University Press, Princeton, N.J., 1970.

\bibitem{Stempak-Torrea} K. Stempak and J. L. Torrea,
{Poisson integrals and Riesz transforms for Hermite function expansions with weights},
\textit{J. Funct. Anal.}
\textbf{202} (2003), 443--472.

\bibitem{Stinga-Torrea} P. R. Stinga and J. L. Torrea,
{Extension problem and HarnackÕs inequality for some fractional operators},
\textit{Comm. Partial Differential Equations}
\textbf{35} (2010), 2092--2122.

\bibitem{Stinga-Torrea-s} P. R. Stinga and J. L. Torrea,
{Regularity theory and extension problem for fractional nonlocal parabolic equations and the master equation},
arXiv:1511.01945 (2016), 21pp.

\bibitem{Thangavelu} S. Thangavelu,
\textit{Lectures on Hermite and Laguerre expansions},
Mathematical Notes \textbf{42},
Princeton University Press, Princeton, NJ, 1993.

%%%%%%%%%%%%%%%%%%%%%%%%%%%%%%%%%%%%%%%%%%%%%%%%%%%%%%
\end{thebibliography}
\end{document}